\definecolor{halfgray}{gray}{0.55} 
\definecolor{webgreen}{rgb}{0,0.5,0}
\definecolor{webbrown}{rgb}{.6,0,0} \hypersetup{%
\newtheorem{theorem}{Theorem}
\newtheorem{lemma}{Lemma}
\newtheorem{Claim}{Claim}
\newtheorem{definition}{Definition}
\newtheorem{remark}{Remark}
\renewcommand{\epsilon}{\varepsilon}
\DeclareMathOperator{\Ker}{Ker}
\def\Id{\text{\rm Id}}
\def\N{\mathbb{N}}
\def\R{\mathbb{R}}
\begin{document}

\title[Shadowing and hyperbolicity for delay equations]{Shadowing, Hyers--Ulam stability and hyperbolicity for nonautonomous linear delay differential equations}

\author[L. Backes]{Lucas Backes}
\address{\noindent Departamento de Matem\'atica, Universidade Federal do Rio Grande do Sul, Av. Bento Gon\c{c}alves 9500, CEP 91509-900, Porto Alegre, RS, Brazil.}
\email{lucas.backes@ufrgs.br}

\author[D. Dragi\v cevi\'{c}]{Davor Dragi\v cevi\'c}
\address{Faculty of Mathematics, University of Rijeka, Radmile Matej\v ci\' c 2, 51000 Rijeka, Croatia}
\email{ddragicevic@math.uniri.hr}

\author[M. Pituk]{Mih\'aly Pituk}
\address{Department of Mathematics, University of Pannonia, Egyetem \'ut 10, 8200 Veszpr{\'e}m, Hungary;
HUN-REN ELTE Numerical Analysis and Large Networks Research Group, Budapest, Hungary}
\email{pituk.mihaly@mik.uni-pannon.hu}

\subjclass[2020]{Primary: 34K06, 37C50, Secondary:	34D09}

\keywords{Delay differential equations, shadowing, Hyers--Ulam stability, hyperbolicity, exponential dichotomy}

\begin{abstract}
It is known that hyperbolic 
non\-autonomous linear delay differential equations in a finite dimensional space are Hyers--Ulam stable and hence shadowable. The converse result is available only in the special case of autonomous and periodic linear delay differential equations with a simple spectrum. In this paper, we prove the converse and hence the equivalence of all three notions in the title for a general class of nonautonomous linear delay differential equations with uniformly bounded coefficients. The importance of the boundedness assumption is shown by an example.
\end{abstract}

\maketitle

\section{Introduction}

In this paper, we are interested in the relationship between the three notions in the title for a general class of nonautonomous linear delay differential equations in~$\mathbb R^d$. The shadowing property and the closely related concept of Hyers--Ulam stability of a dynamical system requires that in a neighborhood of an approximate solution there exists a true solution. The nonautonomous counterpart of hyperbolicity is the notion of an exponential dichotomy. Roughly speaking, a linear delay equation admits an exponential dichotomy if, at each time~$t$, the phase space can be decomposed into the direct sum two closed subspaces, called the stable and unstable subspace at~$t$, 
respectively, 
such that along the stable subspaces the associated evolution family is exponentially contractive and along the unstable subspaces it is exponentially expanding. For the precise notions in the context of delay equations, see Definitions~\ref{shadow}--\ref{ed} in Sec.~\ref{sec: definitions}. 

Recently, it has been shown that hyperbolic nonautonomous linear delay equations are Hyers--Ulam stable and shadowable~\cite{BDPS}, while the converse result has been proved only for autonomous and periodic equations under additional spectral conditions~\cite{BV}. It should be emphasized that delay differential equations form a special case of infinite dimensional systems and for infinite dimensional systems, even in the simplest case of autonomous linear dynamics, the shadowing property alone does not imply hyperbolicity~\cite[Theorem B and Remark 14]{Pujals}. It is therefore a nontrivial question whether the shadowing property implies hyperbolicity for nonautonomous linear delay differential equations. In Sec.~\ref{sec: definitions}, we give an affirmative answer to this question for a general class of linear delay equations with uniformly bounded coefficients. As we shall see in the proof, this follows from the eventual compactness of the solution operator and from the fact that shadowable linear delay differential equations have the so-called Perron property. The Perron property guarantees that the corresponding nonhomogeneous equation has at least one bounded solution for every bounded nonhomogeneity. For other variants of the Perron property and their consequences for the stability of delay differential equations, see~\cite{BerBrav}, \cite{BerDib}, \cite{DibZaf} and the references therein. The Perron property, combined with the compactness of the solution operator, will be used to show that the stable subspaces are closed and have constant finite codimension. This implication is a consequence of a result due to Sch\"affer~\cite{Schaffer} about regular covariant subspaces corresponding to nonautonomous difference equations with compact coefficient operators in a Banach space. The importance of Sch\"affer's result has been also recognized and used by Pecelli~\cite{Pecelli2} in his study of admissibility of various pairs of function spaces for delay equations although he has not considered the case when the input and output spaces consist of bounded and continuous functions 
which is needed for our purposes.

It follows from the above arguments that the
stable subspaces of 
a shadowable linear delay equation are complemented and their complements, the corresponding unstable sets, have constant finite dimension. Solutions starting from the unstable sets can be continued backwards. At this point it is worth mentioning that Kurzweil~\cite{Kurz} obtained a similar qualitative result. In \cite[Theorems~4.1 and~4.2]{Kurz}, he proved that for nonautonomous linear delay differential equations with integrally bounded coefficients the space of those solutions which are (exponentially) bounded as $t\to-\infty$ is finite dimensional and he provided estimates for its dimension.

 After establishing the direct sum decomposition of the phase space into stable and unstable subspaces, we will use the Perron property again and a technique from the admissibility theory (\cite{Dal}, \cite{MasSchaf}, \cite{MRS}, \cite{Pecelli2}) to show the required exponential estimates of the exponential dichotomy. For a detailed exposition of the admissibility theory, see~\cite{BDV} and references therein.

The paper is organized as follows. In Sec.~\ref{sec: definitions}, we introduce the definitions and we formulate our main results. A brief comparison with existing results and the importance of the boundedness assumption are discussed as well. The proof of the main theorem is presented in Sec.~\ref{sec: pr}.

\section{Main result}\label{sec: definitions}

Given $r\geq0$, let $C=C([-r,0],\mathbb R^d)$ be the Banach space of all continuous maps $\phi \colon [-r, 0] \to \R^d$ equipped with the supremum norm, $\|\phi \|:=\sup_{\theta \in [-r, 0]} |\phi(\theta)|$ for $\phi\in C$, where $|\cdot |$ is any norm on $\R^d$. As usual, the symbol $\mathcal L(C,\R^d)$ denotes the space of bounded linear operators from~$C$ into~$\R^d$ equipped with the operator norm. 

Consider the nonautonomous linear delay differential equation
\begin{equation}\label{LDE}
x'(t)=L(t)x_t,
\end{equation}
where $L\colon[0,\infty)\to \mathcal L(C,\R^d)$ is continuous and $x_t\in C$ is defined by $x_t(\theta)=x(t+\theta)$ for $\theta \in [-r, 0]$. Given $s\geq0$, by a \emph{solution of~\eqref{LDE} on $[s,\infty)$}, we mean a continuous function $x\colon [s-r, \infty)\to \R^d$ which is differentiable on $[s, \infty)$ and satisfies~\eqref{LDE} for all $t\ge s$. (By the derivative at $t=s$, we mean the right-hand derivative.) It is well-known~(see \cite[Chap.~6]{Hale}) that, for every $s\geq0$ and $\phi\in C$, Eq.~\eqref{LDE} has a unique solution~$x$ on $[s,\infty)$ with initial value $x_s=\phi$. For $t\geq s\geq0$, the \emph{solution operator} $T(t,s)\colon C\to C$ is defined by 
$T(t,s)\phi=x_t$, where $x$ is the unique solution of~\eqref{LDE} 
with $x_s=\phi$.

In the following definitions, we recall the notions of shadowing, Hyers--Ulam stability and exponential dichotomy for Eq.~\eqref{LDE}.
\begin{definition}\label{shadow}
{\rm
We say that Eq.~\eqref{LDE} is \emph{shadowable} if for each $\epsilon >0$ there exists $\delta>0$ with the following property: for each continuous function $y\colon [-r, \infty)\to \mathbb R^d$ which is continuously differentiable on $[0, \infty)$ and satisfies
\[
\sup_{t\ge 0}|y'(t)-L(t)y_t| \le \delta, 
\]
there exists a solution 
$x$ of~\eqref{LDE} on $[0,\infty)$ such that  
\[
\sup_{t\ge 0} \|x_t-y_t\| \le \epsilon.
\]
}
\end{definition}

\begin{definition}\label{hus}
{\rm
We say that Eq.~\eqref{LDE} is \emph{Hyers--Ulam stable} if there exists $\kappa>0$ such that for each continuous function $y\colon [-r, \infty)\to \mathbb R^d$ which is continuously differentiable on $[0, \infty)$ and satisfies
\[
\sup_{t\ge 0}|y'(t)-L(t)y_t| \le \delta \qquad\text{for some $\delta>0$},
\]
 there exists a solution 
$x$ of~\eqref{LDE} on $[0,\infty)$ such that 
\[
\sup_{t\ge 0} \|x_t-y_t\| \le \kappa\delta.
\]
}
\end{definition}

\begin{definition}\label{ed}
{\rm
We say that Eq.~\eqref{LDE} admits an \emph{exponential dichotomy} if there exist a family of projections $(P(t))_{t\geq0}$ on~$C$ and constants $D, \lambda >0$ with the following properties:
\begin{itemize}
\item for $t\ge s\ge 0$, 
\begin{equation}\label{pro}
P(t)T(t,s)=T(t,s)P(s),
\end{equation}
and $T(t,s)\rvert_{\Ker P(s)} \colon \Ker P(s)\to \Ker P(t)$ is onto and invertible;
\item for $t\ge s\ge 0$, 
\begin{equation}\label{eq: def est stable}
\|T(t,s)P(s)\| \le De^{-\lambda (t-s)};
\end{equation}
\item for $0\le t\le s$,
\begin{equation}\label{eq: def est unstable}
\|T(t,s)Q(s)\| \le De^{-\lambda (s-t)},
\end{equation}
where $Q(s)=\Id-P(s)$ for $s\geq0$ and $T(t,s):=\left (T(s,t)\rvert_{\Ker P(t)} \right )^{-1}$ for $0\leq t\leq s$.
\end{itemize}
}
\end{definition}

It follows immediately from the definitions that if Eq.~\eqref{LDE} is Hyers--Ulam stable, then it is shadowable. The following result is a corollary of a more general theorem on weighted shadowing from~\cite{BDPS}. 

\begin{theorem}\label{T1}
{\rm \cite[Theorem~2.3 ($\gamma=0$, $f\equiv0$)]{BDPS}}
If Eq.~\eqref{LDE} has an exponential dichotomy, then it is Hyers--Ulam stable and hence shadowable. 
\end{theorem}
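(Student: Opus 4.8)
The plan is the following. Since, straight from Definitions~\ref{shadow} and~\ref{hus}, Hyers--Ulam stability implies shadowability (the constant $\kappa$ supplied by Hyers--Ulam stability turns any admissible $\delta$ into the tracking bound $\epsilon=\kappa\delta$), it suffices to prove that \eqref{LDE} is Hyers--Ulam stable. I would first reduce this to a Perron-type bounded-solvability statement. Given a continuous $y\colon[-r,\infty)\to\R^d$ that is $C^1$ on $[0,\infty)$, write $h(t):=y'(t)-L(t)y_t$ and suppose $\sup_{t\ge0}|h(t)|\le\delta$; then $x$ is a solution of \eqref{LDE} on $[0,\infty)$ if and only if $w:=x-y$ solves the inhomogeneous equation $w'(t)=L(t)w_t-h(t)$ on $[0,\infty)$. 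Hence it is enough to exhibit a constant $\kappa=\kappa(D,\lambda)$ such that for every bounded continuous $f\colon[0,\infty)\to\R^d$ the equation $w'(t)=L(t)w_t+f(t)$ has a solution $w$ on $[0,\infty)$ with $\sup_{t\ge0}\|w_t\|\le\kappa\sup_{t\ge0}|f(t)|$.

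To build such a $w$ I would invoke the variation-of-constants formula for delay equations (see \cite[Chap.~6]{Hale}): if $U(t,\sigma)$, $t\ge\sigma\ge0$, is the $d\times d$ fundamental solution of \eqref{LDE}, then the solution issuing from data $\psi$ at time $s$ satisfies $w(t)=\bigl(T(t,s)\psi\bigr)(0)+\int_s^tU(t,\sigma)f(\sigma)\,\mathrm d\sigma$. Using the dichotomy one splits $U$ into a stable and an unstable Green kernel $\mathbb G(t,\sigma)$: for $t>\sigma$ the map $t\mapsto\mathbb G(t,\sigma)$ is the part of the fundamental solution whose phase-space segments lie in $\Ima P(\cdot)$, for $t<\sigma$ it is (minus) the part whose segments lie in $\Ker P(\cdot)$, continued backwards through the invertibility in Definition~\ref{ed}, and $\mathbb G(\cdot,\sigma)$ solves \eqref{LDE} for $t\ne\sigma$ while carrying the unit jump of $U$ across $\sigma=t$. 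The candidate bounded solution is
\[
w(t)=\int_0^\infty \mathbb G(t,\sigma)\,f(\sigma)\,\mathrm d\sigma,\qquad t\ge0,
\]
the integral being understood in the phase space so that $t\mapsto w_t$ is a $C$-valued map.

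For the estimate, \eqref{eq: def est stable} with $s=t$ gives $\|P(t)\|\le D$, hence $\|Q(t)\|\le D+1$, so the dichotomy projections are uniformly bounded; combined with \eqref{eq: def est stable}--\eqref{eq: def est unstable} this yields $\|\mathbb G(t,\sigma)\|\le K\,e^{-\lambda|t-\sigma|}$ for all $t,\sigma\ge0$, with $K$ depending only on $D$. Consequently the integral converges absolutely and
\[
\sup_{t\ge0}\|w_t\|\le K\Bigl(\sup_{t\ge0}|f(t)|\Bigr)\sup_{t\ge0}\int_0^\infty e^{-\lambda|t-\sigma|}\,\mathrm d\sigma\le\frac{2K}{\lambda}\sup_{t\ge0}|f(t)|,
\]
so $\kappa:=2K/\lambda$ works. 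It then remains to check that $(w_t)_{t\ge0}$ is the phase-space trajectory of a genuine $C^1$ solution of $w'(t)=L(t)w_t+f(t)$ on $[0,\infty)$; this is a differentiation-under-the-integral computation using that $\mathbb G(\cdot,\sigma)$ solves \eqref{LDE} away from the diagonal, the jump relation at $\sigma=t$, the intertwining \eqref{pro}, and uniqueness of solutions. Together with the reduction above, this gives Hyers--Ulam stability with the stated $\kappa$, and hence shadowability.

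The hard part is making the kernel $\mathbb G$ and the variation-of-constants formula rigorous in the delay setting. Since $U(\cdot,\sigma)$ has a jump of size $\Id$ at $\sigma$, the segment $U(\cdot,\sigma)_t$ fails to lie in $C$ for $0<t-\sigma<r$, so both the definition of $\mathbb G(t,\sigma)$ near the diagonal and the proof that the displayed integral returns an element of $C$ (with $t\mapsto w_t$ differentiable, and with the right derivative on $[0,\infty)$) require care. The natural framework is the sun--star calculus for delay equations, in which $\mathbb G(\cdot,\sigma)$ is the dichotomy-split evolution of the point-mass $X_0$, the integral is taken in the space $C^{\odot\ast}$, and one shows its value lies in the embedded copy of $C$; one also uses that the unstable subspace $\Ker P(\sigma)$ is finite-dimensional---a consequence of \eqref{eq: def est unstable} together with the compactness of $T(t,s)$ for $t-s\ge r$---so that the unstable part of $\mathbb G$ is an honest $C$-valued kernel. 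An alternative that avoids this machinery is to discretize at step~$r$, transfer the exponential dichotomy to the operator sequence $\bigl(T((n+1)r,nr)\bigr)_{n\ge0}$ on $C$, construct the bounded solution of the discrete inhomogeneous problem by the classical hyperbolic Green's-function formula, and interpolate back to continuous time with the interpolation error controlled by \eqref{eq: def est stable}--\eqref{eq: def est unstable}; the cost is a longer but entirely routine verification. For background on the admissibility techniques underlying either approach, see \cite{BDV}.
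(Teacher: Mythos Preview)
The paper does not prove Theorem~\ref{T1}; it is stated there as a corollary of a known result and is simply cited from \cite[Theorem~2.3]{BDPS} with the specialisation $\gamma=0$, $f\equiv 0$. So there is no ``paper's own proof'' to compare against beyond the citation.

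Your outline is the standard admissibility/Green-function argument and is the natural route to such a result; the reduction $w=x-y$ to the inhomogeneous equation is correct, and the bounded-solvability statement with the constant $2K/\lambda$ is exactly what one expects from an exponential dichotomy. You also correctly flag the genuine technical obstacle in the delay setting: the fundamental solution $U(\cdot,\sigma)$ jumps at $\sigma$, so the phase-space segments $U(\cdot,\sigma)_t$ are not in $C$ for $0<t-\sigma<r$, and making sense of the displayed integral (and verifying that it returns a $C^1$ solution) requires either the sun--star machinery or a discretise-then-interpolate workaround. Either of these can be carried out, and your sketch is honest about what remains to be done. In short: the approach is sound and is in the spirit of the admissibility arguments referenced in \cite{BDPS} and \cite{BDV}, but since the present paper defers the proof to \cite{BDPS}, there is nothing further here to benchmark your argument against.
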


The purpose of this paper is to show that if the linear operators $L(t)\colon C\to\mathbb R^d$, $t\geq0$, 
are uniformly bounded, then the converse of Theorem~\ref{T1} is also true. The importance of the uniform boundedness of the coefficients will be illustrated by an example. Our main result is the following theorem.
\begin{theorem}\label{T2}
Suppose that
\begin{equation}\label{bc}
M:=\sup_{t\ge 0} \|L(t)\|<\infty.
\end{equation}
If Eq.~\eqref{LDE} is shadowable, then it has an exponential dichotomy.
\end{theorem}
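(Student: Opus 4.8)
The plan is to follow the roadmap sketched in the introduction: pass from the continuous delay equation to a discrete dynamical system via the solution operators $T(n+1,n)$, exploit eventual compactness, extract the Perron property from shadowability, and then invoke admissibility-type arguments to produce the dichotomy. First I would set $U_n := T(n+1,n) \in \mathcal L(C)$ and observe that, because $r$ is fixed, for $n$ large the operator $T(n+k,n)$ is compact once $k > r$; together with the uniform bound \eqref{bc}, which gives $\sup_n \|U_n\| < \infty$, this places us exactly in the setting of Sch\"affer's theorem on regular covariant subspaces for nonautonomous difference equations with (eventually) compact coefficients. The link between the continuous shadowing hypothesis and a discretized admissibility statement is the step I would carry out next: given a bounded forcing $g\colon[0,\infty)\to\mathbb R^d$, one builds an approximate solution $y$ with $\sup_t |y'(t)-L(t)y_t|$ controlled by $\|g\|_\infty$ (after rescaling $g$ to be small, using linearity) and applies Definition~\ref{shadow} to obtain a genuine solution $x$ with $\sup_t\|x_t-y_t\|$ bounded; then $x-y$ is the desired bounded solution of the nonhomogeneous equation. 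This is the Perron property, and by a standard closed-graph / uniform-boundedness argument it upgrades to a quantitative bound: there is $K>0$ with a bounded solution of norm $\le K\|g\|_\infty$ for every bounded $g$.

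Next I would define the stable set at time $s$ as $\{\phi\in C : \sup_{t\ge s}\|T(t,s)\phi\|<\infty\}$ — equivalently the set of initial data giving bounded solutions. The Perron property, combined with eventual compactness, is what forces these sets to be closed subspaces of constant finite codimension: this is precisely the content of Sch\"affer's result applied to the discretized cochain, and it is the structural heart of the argument. Once the stable subspaces $\mathcal S(s)$ are known to be closed with finite codimension, I would construct complements $\mathcal U(s)$ — the unstable sets — and check, again using compactness of $T(t,s)$ for $t-s>r$ and a dimension-counting / Fredholm argument, that $T(t,s)$ restricts to an isomorphism $\mathcal U(s)\to\mathcal U(t)$ and that $\dim\mathcal U(t)$ is constant; invertibility on the unstable part lets one define the negative-time operators $T(t,s)$ for $t\le s$ as in Definition~\ref{ed}. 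The projections $P(s)$ onto $\mathcal S(s)$ along $\mathcal U(s)$ then automatically satisfy the invariance relation \eqref{pro}.

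The final block is to promote the mere boundedness afforded by the Perron property into the exponential estimates \eqref{eq: def est stable}–\eqref{eq: def est unstable}. Here I would run the classical admissibility-to-dichotomy machinery (as in \cite{Dal}, \cite{MasSchaf}, \cite{MRS}, \cite{Pecelli2}): using the quantitative Perron bound one first shows uniform boundedness of $T(t,s)P(s)$ and of $T(t,s)Q(s)$ on the respective half-lines, and then a discrete Neumann-series / telescoping argument (feeding in suitably chosen piecewise forcings supported on windows of length comparable to $r$, and using $\sup_n\|U_n\|<\infty$ to pass between the continuous and discrete time scales) yields the exponential decay with some $D,\lambda>0$. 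I expect the main obstacle to be the careful bookkeeping in this last step — in particular ensuring that the discretized admissibility pair one works with is $(\ell^\infty, \ell^\infty)$-type, matching the bounded-continuous input/output spaces that, as the authors note, were not treated by Pecelli, and transferring Sch\"affer's closedness/finite-codimension conclusion back to the continuous-time stable subspaces without losing uniformity in $s$. A secondary technical point is verifying that the approximate-solution construction in the Perron step produces a $y$ that is genuinely $C^1$ on $[0,\infty)$ and has the correct initial segment on $[-r,0]$, which requires a small mollification near $t=0$.
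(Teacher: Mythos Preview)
Your proposal is correct and follows essentially the same route as the paper: derive the Perron property from shadowability, feed the discretized cochain into Sch\"affer's lemma to obtain closedness and constant finite codimension of the stable subspaces, build complements $\mathcal U(s)$ with $T(t,s)\rvert_{\mathcal U(s)}$ invertible, and then run admissibility arguments to upgrade boundedness to exponential estimates. Two minor divergences: the paper obtains the exponential bounds directly in continuous time via explicit cut-off test functions (rather than passing to a discrete $(\ell^\infty,\ell^\infty)$ problem), and it places the closed-graph step \emph{after} fixing the complement $\mathcal U$, so that the solution map $z\mapsto x$ is single-valued; also, your mollification worry near $t=0$ is unnecessary, since one may simply take $y$ to be the solution of $y'=L(t)y_t+(\delta/\|g\|_\infty)g$ with $y_0=0$.
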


The proof of Theorem~\ref{T2} will be given in Sec.~\ref{sec: pr}.

As a corollary of Theorems~\ref{T1} and~\ref{T2}, we obtain that under condition~\eqref{bc}  all three notions in the title are equivalent for Eq.~\eqref{LDE}.
\begin{theorem}\label{T3}
If~\eqref{bc} holds, then the  following statements are equivalent:
\begin{enumerate}
\item[(i)] Eq.~\eqref{LDE} is Hyers--Ulam stable;
\item[(ii)] Eq.~\eqref{LDE} is shadowable;
\item[(iii)] Eq.~\eqref{LDE} admits an exponential dichotomy.
\end{enumerate}
\end{theorem}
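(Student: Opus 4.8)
The plan is to prove Theorem~\ref{T2}, since Theorem~\ref{T3} then follows immediately: Theorem~\ref{T1} gives (iii)$\Rightarrow$(i), the trivial implication from the definitions gives (i)$\Rightarrow$(ii), and Theorem~\ref{T2} gives (ii)$\Rightarrow$(iii). So throughout we assume the uniform bound~\eqref{bc} and that Eq.~\eqref{LDE} is shadowable, and we must produce the projection family $(P(t))_{t\ge 0}$ together with the constants $D,\lambda$ realizing the exponential dichotomy.

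\textbf{Step 1: From shadowing to the Perron property.} The first step is to upgrade ``shadowable'' to a statement about the nonhomogeneous equation $x'(t)=L(t)x_t+g(t)$: for every bounded continuous $g\colon[0,\infty)\to\R^d$ there is a bounded solution on $[0,\infty)$. Given such a $g$, one integrates it to build a pseudo-solution $y$ (with $y_0=0$, say $y(t)=\int_0^t g$ suitably interpreted in the delay setting) whose defect $y'(t)-L(t)y_t$ is controlled by $\|g\|_\infty$ using the bound~\eqref{bc}; shadowing then yields a genuine solution $x$ of~\eqref{LDE} within bounded distance of $y$, and $x-y$ (or rather a correction of it) is the sought bounded solution of the nonhomogeneous equation. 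Care is needed because shadowing is stated with a fixed $\delta=\delta(\epsilon)$, so one rescales $g$ to have small norm and uses linearity; one also wants a uniform bound $\|x\|_\infty\le K\|g\|_\infty$, which is obtained exactly as in the passage from shadowing to Hyers--Ulam stability once linearity is invoked (here the roundtrip through shadowing plus the uniform bound~\eqref{bc} forces a linear estimate). This is the ``Perron property'' referenced in the introduction.

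\textbf{Step 2: Discretization and Sch\"affer's theorem for the stable subspaces.} Next I would pass to the discrete-time dynamics $U_n:=T(n+1,n)\colon C\to C$, $n\in\N$. By the smoothing property of delay equations (eventual compactness, \cite[Chap.~6]{Hale}), each composition $T(t,s)$ with $t-s\ge r$ is a compact operator, so in particular $U_n$ is compact once $r<1$; in general one uses $\widetilde U_n:=T((n+1)m,nm)$ for $m>r$ to reduce to the compact case, and the uniform bound~\eqref{bc} guarantees $\sup_n\|U_n\|<\infty$. Define the stable subspace $S(t)$ at time $t$ as the set of $\phi\in C$ for which the forward solution $T(\cdot,t)\phi$ is bounded (equivalently, by a standard argument exploiting the compact smoothing and Step~1, tends to zero); the covariance relation $T(t,s)S(s)\subseteq S(t)$ is immediate. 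The Perron property from Step~1, transported to the discrete equation, says that the corresponding input--output operator is surjective onto bounded sequences, and Sch\"affer's result~\cite{Schaffer} on regular covariant subspaces for difference equations with compact coefficients then yields that each $S(t)$ is closed and admits a closed complement of \emph{constant finite dimension}; call a complementary family $N(t)$ the unstable set, and let $P(t)$ be the projection with range $S(t)$ and kernel $N(t)$. Finite-dimensionality of $N(t)$ plus compactness gives that solutions starting in $N(t)$ can be continued backward and $T(t,s)|_{N(s)}\colon N(s)\to N(t)$ is an isomorphism, which is the first bullet of Definition~\ref{ed}.

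\textbf{Step 3: Exponential estimates via admissibility.} Finally, I would promote the qualitative splitting to the quantitative exponential dichotomy. For the stable part: the Perron property (bounded nonhomogeneity $\Rightarrow$ bounded solution) together with the closedness of $S(t)$ and a uniform open mapping/closed graph argument gives a uniform bound on the input--output operator, and a now-classical argument in admissibility theory (\cite{Dal,MasSchaf,MRS,Pecelli2}, see~\cite{BDV}) converts such a uniform bound, in the presence of the uniform bound~\eqref{bc} on the generator, into the exponential decay estimate~\eqref{eq: def est stable}; the discrete-to-continuous passage uses~\eqref{bc} once more to fill the gaps between integer times. For the unstable part one runs the same machine on the finite-dimensional invariant family $N(t)$ with time reversed, obtaining~\eqref{eq: def est unstable}; here finite-dimensionality makes the backward estimates comparatively soft. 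Combining the three bullets gives the exponential dichotomy and completes the proof of Theorem~\ref{T2}, hence of Theorem~\ref{T3}.

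\textbf{Main obstacle.} The crux is Step~2: identifying the correct discrete-time reformulation so that Sch\"affer's theorem applies, and in particular checking that shadowability really does deliver the Perron/admissibility hypothesis on the space of \emph{bounded continuous} functions (the case Pecelli did not treat), and that the compact-smoothing property is invoked at the right place to force the unstable sets to be finite-dimensional rather than merely closed and complemented. Getting the uniform constants in Steps~1 and~3 to be independent of the base time (so that $D,\lambda$ are genuinely uniform) is the other place where the hypothesis~\eqref{bc} is essential and must be used carefully; the promised counterexample shows this cannot be circumvented.
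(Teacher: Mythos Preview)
Your overall strategy matches the paper's: shadowing $\Rightarrow$ Perron property, then Sch\"affer's theorem applied to the discretized dynamics (the paper uses step $r$, i.e.\ $A(n)=T((n+1)r,nr)$) to get closedness and finite codimension of the stable subspace, then admissibility techniques for the exponential bounds. The identification of the main obstacle is also accurate.

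However, your Step~1 construction is wrong as written. If you set $y(t)=\int_0^t g(s)\,ds$ (extended by $0$ on $[-r,0]$), then the defect is
\[
y'(t)-L(t)y_t = g(t)-L(t)y_t,
\]
and $|L(t)y_t|\le M\|y_t\|\le M\,t\,\|g\|_\infty$, which grows without bound in~$t$; so this $y$ is \emph{not} a pseudo-solution with uniformly bounded defect, and the bound~\eqref{bc} does not save you. The paper's route is much simpler and does not need~\eqref{bc} at this stage: take $y$ to be any solution of $y'=L(t)y_t+\frac{\delta}{\|g\|_\infty}g(t)$ (e.g.\ with $y_0=0$); then the defect is \emph{exactly} $\frac{\delta}{\|g\|_\infty}g$, hence bounded by $\delta$, shadowing produces a true solution $\tilde x$ within distance $1$ of $y$, and $x:=\frac{\|g\|_\infty}{\delta}(y-\tilde x)$ is the bounded solution of the nonhomogeneous equation with $\|x\|_\infty\le \|g\|_\infty/\delta$.

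Two smaller points. First, the uniform bound you need in Step~3 is on the \emph{unique} bounded solution with $x_0\in\mathcal U$; this requires the splitting from Step~2 first (to get uniqueness) and then a Closed Graph argument, so the order is splitting $\to$ uniqueness $\to$ uniform bound $\to$ exponential estimates. Second, the paper carries out the exponential estimates (your Step~3) entirely in continuous time, building explicit cut-off test functions and applying the Perron bound directly; there is no discrete-to-continuous passage at that stage. The discretization is used only to invoke Sch\"affer.
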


\begin{remark}\label{R1}
{\rm
Theorem~\ref{T3} is a significant improvement of the recent results by Barreira and Valls~\cite{BV}, where the equivalence of Hyers--Ulam stability and the existence of an exponential dichotomy 
has been proved only in the rather special case of autonomous and periodic linear delay differential equations under the additional assumption that their spectrum is ``simple" (see \cite[Theorems~4.3 and~4.7]{BV}).
}
\end{remark}

\begin{remark}\label{R2}
{\rm The boundedness assumption~\eqref{bc} in Theorems~\ref{T2} and~\ref{T3} cannot be omitted. This can be shown by the following example taken from~\cite{Dal}.
Let  $v\colon[0,\infty)\rightarrow(0,\infty)$ be a positive, continuously differentiable function satisfying
\begin{gather}
\int_0^t v(s)\,ds\leq v(t),\qquad t\geq0,\label{vpr1}\\
v(t)\to\infty,\qquad t\to\infty,\label{vpr2}
\end{gather}
and
\begin{equation}\label{vpr3}
\frac{v(n-\alpha_n)}{v(n)}>n,\qquad n\in\mathbb N,
\end{equation}
where $\alpha=(\alpha_n)_{n\in\mathbb N}$ is a sequence of positive numbers such that $\alpha_n\to0$ as $n\to\infty$. For an explicitly given~$v$ and~$\alpha$ with the above
properties, see~\cite[p.~131]{Dal}. Consider the scalar ordinary differential equation
\begin{equation}\label{sode}
	x'(t)=a(t)x(t),
\end{equation}
where
\begin{equation*}
	a(t)=-\frac{v'(t)}{v(t)},\qquad t\geq0.
\end{equation*}
Eq.~\eqref{sode} is a special case of~\eqref{LDE} when $r=0$, $d=1$ and $L(t)\phi=a(t)\phi(0)$ for $t\geq0$ and $\phi\in C$. The solutions of Eq.~\eqref{sode} have the form
\begin{equation}\label{solrep}
	x(t)=\frac{v(s)}{v(t)}x(s),\qquad t,s\in[0,\infty).
\end{equation}
Hence
\[
[T(t,s)\phi](0)=\frac{v(s)}{v(t)}\phi(0),\qquad t\geq s\geq 0,\quad
\phi\in C,
\]
which implies that
\begin{equation}\label{normeq}
\|T(t,s)\|=\frac{v(s)}{v(t)},\qquad t\geq s\geq0.	
\end{equation}
From this and~\eqref{vpr1}, we find for $t\geq0$,
\[
\int_0^t\|T(t,s)\|\,ds\leq\int_0^t\frac{v(s)}{v(t)}\,ds\leq1.
\]
This shows that the assumptions of \cite[Theorem~2.2]{BDPS} are satisfied with $f\equiv0$, $w\equiv1$, $P(t)\equiv\Id$, $Q(t)\equiv0$, $M=1$ and $c=0$. By the application of \cite[Theorem~2.2]{BDPS}, we conclude that Eq.~\eqref{sode} is Hyers--Ulam stable and hence shadowable. We will show that Eq.~\eqref{sode} has no exponential dichotomy. Suppose, for the sake of contradiction, that Eq.~\eqref{sode} has an exponential dichotomy. It follows from the definition that if Eq.~\eqref{LDE} has an exponential dichotomy and all solutions of~\eqref{LDE} are bounded, then $Q(s)=0$ and hence $P(s)=\Id$ for all $s\geq0$, which implies that the exponential dichotomy is an exponential contraction, i.e
\begin{equation}\label{expcontr}
	\|T(t,s)\| \le De^{-\lambda (t-s)},\qquad t\geq s\geq0.
\end{equation}
Indeed, if $Q(s)\phi\neq0$ for some $s\geq0$ and $\phi\in C$, then~\eqref{eq: def est unstable} implies that the norm of the solution $x_t=T(t,s)\phi$ of~Eq.~\eqref{LDE} tends to infinity exponentially as $t\to\infty$, which contradicts the boundedness of~$x$. 
Since Eq.~\eqref{sode} has an exponential dichotomy and ~\eqref{vpr2} and~\eqref{solrep} imply that all solutions of~\eqref{sode} are bounded, we have that~\eqref{expcontr} holds. 
From~\eqref{normeq} and~\eqref{expcontr}, we obtain for $n\in\mathbb N$,
\[
\frac{v(n-\alpha_n)}{v(n)}=\|T(n,n-\alpha_n)\|\leq De^{-\lambda\alpha_n}.
\]
From this, letting $n\to\infty$ and taking into account that $\alpha_n\to0$ as $n\to\infty$, we conclude that
\[
\limsup_{n\to\infty}\frac{v(n-\alpha_n)}{v(n)}\leq D.
\]
On the other hand,~\eqref{vpr3} implies that $\lim_{n\to\infty}\frac{v(n-\alpha_n)}{v(n)}=\infty$, which yields a contradiction.
Thus, Eq.~\eqref{sode} is Hyers--Ulam stable and shadowable, but it has no exponentially dichotomy.
}
\end{remark}

\section{Proof of the main theorem}\label{sec: pr}
In this section, we a give a proof of Theorem \ref{T2}.
The key arguments are the following:
\begin{itemize}
\vskip5pt
 \item The shadowing property of Eq.~\eqref{LDE} implies the Perron type property. Name\-ly, the nonhomogeneous equation associated with Eq.~\eqref{LDE} has at least one bounded solution for every bounded and continuous nonhomogeneity.
 \vskip5pt
 \item The Perron property, combined with the eventual compactness of the solution operator and Sch\"affer's result about regular covariant sequences corresponding to compact linear operators in a Banach space, implies that the stable subspace of Eq.~\eqref{LDE} is closed and has finite codimension.
 \vskip5pt
 \item  The fact that the stable subspace is complemented yields a direct sum decomposition of the phase space into stable and unstable subspaces at each time instant in a standard manner.
 \vskip5pt
 \item  The required exponential estimates along the stable and unstable directions are obtained by appropriate adaptation of a technique from the admissibility theory of ordinary differential equations to delay equations.
\end{itemize}

\noindent
Now we present the details.
\begin{proof}[Proof of Theorem~\ref{T2}]
 Suppose that Eq.~\eqref{LDE} is shadowable. We will show that Eq.~\eqref{LDE} admits an exponential dichotomy. For better transparency,  
we split the proof into a series of auxiliary results. 

\begin{Claim}\label{LEM1}
Eq.~\eqref{LDE} has the following \emph{Perron type property}:
for each bounded and continuous function $z\colon [0, \infty)\to \R^d$,  
there exists a bounded and continuous function $x\colon [-r, \infty)\to \R^d$ which is differentiable on $[0, \infty)$ 
and satisfies
\begin{equation}\label{adm}
x'(t)=L(t)x_t+z(t), \qquad t\ge 0.
\end{equation}
\end{Claim}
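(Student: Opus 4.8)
The plan is to derive the Perron property from the shadowing property by building, out of a given bounded continuous nonhomogeneity $z$, a suitable approximate solution of the homogeneous equation~\eqref{LDE}, applying shadowing to it, and then showing that the difference between the true solution and the approximate one solves the nonhomogeneous equation~\eqref{adm}. First I would fix a bounded continuous $z\colon[0,\infty)\to\R^d$ with $\sup_{t\ge0}|z(t)|=:c>0$ (the case $c=0$ being trivial), and rescale: for any $\delta>0$ it suffices to treat $z$ with $\sup_{t\ge0}|z(t)|\le\delta$, since if $x$ is a bounded solution of $x'=L(t)x_t+\tfrac{\delta}{c}z(t)$ then $\tfrac{c}{\delta}x$ is a bounded solution of~\eqref{adm}; so we may assume $\sup_{t\ge0}|z(t)|\le\delta$, where $\delta$ is the number furnished by Definition~\ref{shadow} for a fixed $\epsilon=1$ (say).

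Next I would define the candidate approximate solution $y\colon[-r,\infty)\to\R^d$ by
\[
y(t)=\int_0^t z(\sigma)\,d\sigma \quad (t\ge0),\qquad y(t)=0\quad(t\in[-r,0]).
\]
This $y$ is continuous on $[-r,\infty)$ and continuously differentiable on $[0,\infty)$ with $y'(t)=z(t)$. Then
\[
\sup_{t\ge0}\bigl|y'(t)-L(t)y_t\bigr|
=\sup_{t\ge0}\bigl|z(t)-L(t)y_t\bigr|,
\]
and here is the first obstacle: $y$ itself need not be bounded, so $|L(t)y_t|$ need not be small, and this naive choice does not produce an admissible perturbation of size $O(\delta)$. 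To fix this I would instead use the boundedness of the solution operator on compact time intervals together with a truncation/telescoping construction: one builds $y$ piecewise so that on each unit interval $y$ follows a genuine solution of~\eqref{LDE} issued from the current value, corrected by the increment $\int z$, so that the defect $y'-L(t)y_t$ equals $z(t)$ up to a term controlled by $\|L(t)\|$ times the (now bounded) local oscillation of $y$. Concretely, it is cleaner to first solve~\eqref{adm} locally and patch: let $u$ be the (unique, by~\cite[Chap.~6]{Hale} applied to the nonhomogeneous equation, which is again a linear delay equation with a continuous forcing) solution of~\eqref{adm} on $[0,\infty)$ with $u_0=0$; this $u$ exists but is not a priori bounded. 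Shadowing is precisely the tool that will upgrade ``a solution exists'' to ``a bounded solution exists'': apply Definition~\ref{shadow} to $y:=u$, which satisfies $y'(t)-L(t)y_t=z(t)$, hence $\sup_{t\ge0}|y'(t)-L(t)y_t|=\sup_{t\ge0}|z(t)|\le\delta$. By shadowability there is a solution $x$ of the \emph{homogeneous} equation~\eqref{LDE} with $\sup_{t\ge0}\|x_t-u_t\|\le\epsilon$. Set $w:=u-x$. Then $w$ is continuous on $[-r,\infty)$, differentiable on $[0,\infty)$, satisfies $w'(t)=u'(t)-x'(t)=L(t)u_t+z(t)-L(t)x_t=L(t)w_t+z(t)$, i.e.\ $w$ solves~\eqref{adm}, and $\sup_{t\ge0}\|w_t\|=\sup_{t\ge0}\|u_t-x_t\|\le\epsilon<\infty$, so $w$ is bounded. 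This $w$ is the desired bounded solution and proves the claim.

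The main obstacle, then, is the one flagged above: shadowing as stated in Definition~\ref{shadow} requires the approximate solution $y$ to have a \emph{uniformly small defect}, and the natural choice $y=u$ (the solution of~\eqref{adm} with $u_0=0$) has defect exactly $z$, which is indeed uniformly bounded by $\delta$ after the initial rescaling—so in fact no truncation is needed and the argument above goes through directly; the only care needed is the rescaling step to match the given $z$ to the $\delta$ coming from shadowing with a fixed $\epsilon$, and the observation that $u$ is a bona fide solution on all of $[0,\infty)$ of a linear delay equation so that $u_t\in C$ for every $t$ and $L(t)u_t$ makes sense. I would also remark that boundedness of the coefficients~\eqref{bc} is \emph{not} needed for this claim—it will be used only in the later steps—so Claim~\ref{LEM1} holds for any shadowable equation~\eqref{LDE}.
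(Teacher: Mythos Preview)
Your final argument is correct and is essentially the paper's proof: rescale $z$ so that $\sup_{t\ge0}|z(t)|\le\delta$ (with $\delta$ corresponding to $\epsilon=1$ in Definition~\ref{shadow}), take any solution $u$ of the nonhomogeneous equation~\eqref{adm} (the paper allows an arbitrary one, you take $u_0=0$), apply shadowing to get a homogeneous solution $x$ with $\sup_{t\ge0}\|u_t-x_t\|\le1$, and set $w=u-x$; the paper then undoes the rescaling explicitly. The detour through $y(t)=\int_0^t z$ and the ``truncation/telescoping'' discussion is unnecessary and can be deleted---as you yourself conclude, no patching is needed once one uses a genuine solution of~\eqref{adm} as the approximate trajectory.
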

\begin{proof}[Proof of Claim~\ref{LEM1}]
Let $z\colon [0, \infty)\to \R^d$ be an arbitrary bounded and continuous function. If $z(t)=0$ identically for $t\geq0$, then~\eqref{adm} is trivially satisfied with $x(t)=0$ for $t\geq-r$. Now suppose that $z(t)\neq0$ for some $t\geq0$ so that $\|z\|_\infty:=\sup_{t\ge 0}|z(t)|>0$.
Choose a constant $\delta >0$ corresponding to the choice of $\epsilon=1$ in Definition~\ref{shadow}. Take an arbitrary solution $y\colon [-r, \infty)\to \R^d$ of the nonhomogeneous equation
\[
y'(t)=L(t)y_t+\frac{\delta}{\|z\|_\infty}z(t), \qquad t\ge 0.
\]
(The unique solution~$y$ with initial value $y_0=0$ is sufficient for our purposes.) Since
\[
\sup_{t\ge 0}|y'(t)-L(t)y_t| \le \delta,
\]
according to Definition~\ref{shadow}, there exists a solution~$\tilde x$ 
 of Eq.~\eqref{LDE} on $[0,\infty)$ such that
\[
\sup_{t\ge -r}|\tilde x(t)-y(t)|=\sup_{t\ge 0}\|\tilde x_t-y_t\| \le 1.
\]
Define a continuous $x\colon [-r, \infty)\to \R^d$ by 
\[
x(t):=\frac{\|z\|_\infty}{\delta}(\,y(t)-\tilde x(t)\,), \qquad t\ge -r.
\]
It can be easily verified that $x$ satisfies~\eqref{adm} and 
\[
\sup_{t\ge -r}|x(t)|\le \frac{\|z\|_\infty}{\delta}<\infty.
\]
\end{proof}

For each $s\ge 0$, define
\[
\mathcal S(s)=\bigl \{ \phi\in C: \sup_{t\ge s}\|T(t,s)\phi \|<\infty \bigr \}.
\]
Clearly, $\mathcal S(s)$ is a subspace of $C$ which will be called the \emph{stable subspace of Eq.~\eqref{LDE} at time $s\geq0$}. 
\begin{Claim}\label{INV}
For each $t\ge s \ge 0$, we have that 
\[
[\,T(t,s)\,]^{-1}(\mathcal S(t))=\mathcal S(s).
\]
\end{Claim}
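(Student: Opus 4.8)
The proof rests entirely on the evolution (cocycle) property of the solution operator, namely
\[
T(u,s)=T(u,t)T(t,s)\qquad\text{for }u\ge t\ge s\ge 0,
\]
which follows from the uniqueness of solutions of~\eqref{LDE} (see~\cite[Chap.~6]{Hale}), together with the fact that, for fixed $s\ge 0$ and $\phi\in C$, the map $u\mapsto T(u,s)\phi$ is continuous from $[s,\infty)$ into $C$ (again a standard consequence of the continuous dependence of solutions on time). I will prove the two inclusions separately.

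First I would show $\mathcal S(s)\subseteq [\,T(t,s)\,]^{-1}(\mathcal S(t))$. If $\phi\in\mathcal S(s)$, then $\sup_{u\ge s}\|T(u,s)\phi\|<\infty$, and since $[t,\infty)\subseteq[s,\infty)$ and $T(u,t)T(t,s)\phi=T(u,s)\phi$ for $u\ge t$, we get $\sup_{u\ge t}\|T(u,t)\bigl(T(t,s)\phi\bigr)\|<\infty$, i.e.\ $T(t,s)\phi\in\mathcal S(t)$, which is exactly $\phi\in[\,T(t,s)\,]^{-1}(\mathcal S(t))$.

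For the reverse inclusion $[\,T(t,s)\,]^{-1}(\mathcal S(t))\subseteq\mathcal S(s)$, take $\phi\in C$ with $T(t,s)\phi\in\mathcal S(t)$, so that $\sup_{u\ge t}\|T(u,s)\phi\|=\sup_{u\ge t}\|T(u,t)\bigl(T(t,s)\phi\bigr)\|<\infty$. It remains to bound $\|T(u,s)\phi\|$ for $u$ in the finite interval $[s,t]$; but $u\mapsto T(u,s)\phi$ is continuous on the compact set $[s,t]$, hence $\sup_{s\le u\le t}\|T(u,s)\phi\|<\infty$. Combining the two estimates yields $\sup_{u\ge s}\|T(u,s)\phi\|<\infty$, i.e.\ $\phi\in\mathcal S(s)$.

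There is no serious obstacle here: the only point requiring a word of justification is the boundedness of $T(\cdot,s)\phi$ on the compact interval $[s,t]$, which is immediate from continuity of solutions and does not use the shadowing hypothesis or the boundedness assumption~\eqref{bc}. I would state the cocycle identity and the continuity of $u\mapsto T(u,s)\phi$ explicitly at the start of the claim's proof, as both will be reused repeatedly in the subsequent arguments.
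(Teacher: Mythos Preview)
Your proposal is correct and follows essentially the same approach as the paper's own proof: both establish the two inclusions via the cocycle identity $T(u,s)=T(u,t)T(t,s)$ and handle the finite interval $[s,t]$ by invoking continuity of $u\mapsto T(u,s)\phi$ on a compact set. The only cosmetic difference is that the paper phrases the compactness step in terms of the solution segment $x_\tau$ with a reference to \cite[Sec.~2.2, Claim~2.1]{Hale}, but the content is identical.
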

\begin{proof}[Proof of Claim~\ref{INV}]
Let $t\ge s \ge 0$. If $\phi \in \mathcal S(s)$, then, 
\[
\sup_{\tau\ge t}\|T(\tau,t)T(t, s)\phi \|=\sup_{\tau\ge t}\|T(\tau,s)\phi \|\le \sup_{\tau\ge s}\|T(\tau,s)\phi \|<\infty. 
\]
This shows that $T(t,s)\phi \in \mathcal S(t)$ and hence $\phi \in [\,T(t,s)\,]^{-1}(\mathcal S(t))$.

Now suppose that $\phi \in [\,T(t,s)\,]^{-1}(\mathcal S(t))$. Then, $T(t,s)\phi \in \mathcal S(t)$, which implies that
\begin{equation}\label{Tbound}
\sup_{\tau\ge t}\|T(\tau,s)\phi\|=\sup_{\tau\ge t}\|T(\tau,t)T(t,s)\phi\|<\infty.
\end{equation}
Let~$x$ denote the solution of Eq.~\eqref{LDE} with $x_s=\phi$. It is well-known (see, e.g., \cite[Sec.~2.2, Claim~2.1, p.~38]{Hale}) that the solution segment $x_\tau=T(\tau,s)\phi$ is a continuous function of $\tau\in[s,\infty)$ and hence it is bounded on the compact interval $[s,t]$. Therefore, $\sup_{s\le \tau\le t}\|T(\tau, s)\phi \|<\infty$, which, together with~\eqref{Tbound}, implies that
$\sup_{\tau\ge s}\|T(\tau,s)\phi \|<\infty$. Thus, $\phi \in \mathcal S(s)$.
\end{proof}

\begin{Claim}\label{ee}
For $t\ge s\ge 0$, we have the algebraic sum decomposition 
\begin{equation}\label{SPL}
C=T(t,s)C+\mathcal S(t).
\end{equation}
\end{Claim}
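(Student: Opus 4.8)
The plan is to establish the only nontrivial inclusion $C\subseteq T(t,s)C+\mathcal S(t)$ by producing, for an arbitrary $\phi\in C$, an explicit splitting $\phi=\xi+\eta$ with $\xi\in T(t,s)C$ and $\eta\in\mathcal S(t)$. The idea is to ``cut off'' the forward solution issuing from $\phi$ at time $t$, so as to obtain a \emph{bounded} function that still solves \eqref{LDE} on a right neighbourhood of $t$, and then to repair the defect thereby created far out in time by invoking the Perron property from Claim~\ref{LEM1}.

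More precisely, I would first let $u$ be the solution of \eqref{LDE} on $[t,\infty)$ with $u_t=\phi$, fix $\chi\in C^1([t-r,\infty))$ with $0\le\chi\le1$, $\chi\equiv1$ on $[t-r,t+1]$ and $\chi\equiv0$ on $[t+2,\infty)$, and put $v:=\chi u$. Then $v$ is bounded (it vanishes for large arguments, while $u$ is bounded on compact intervals), $v_t=\phi$ (because $\chi\equiv1$ on $[t-r,t]$), and on $[t,\infty)$ the function $v$ is continuously differentiable and satisfies $v'(\tau)=L(\tau)v_\tau+z(\tau)$, where $z(\tau):=v'(\tau)-L(\tau)v_\tau$. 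Since $v$ coincides with the genuine solution $u$ on the whole interval $[t-r,t+1]$, one checks that $z$ vanishes on $[t,t+1]$; and since $v$ vanishes identically past $t+2$, the function $z$ is continuous on $[t,\infty)$ and has compact support, hence is bounded. Extending $z$ by $0$ on $[0,t)$ therefore produces a bounded, continuous function on $[0,\infty)$ which vanishes on $[0,t]$. Applying Claim~\ref{LEM1} to it yields a bounded, continuous $w\colon[-r,\infty)\to\R^d$, differentiable on $[0,\infty)$, with $w'(\tau)=L(\tau)w_\tau+z(\tau)$ for $\tau\ge0$. Now set $x:=v-w$ on $[t-r,\infty)$. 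On $[t,\infty)$ the nonhomogeneous terms cancel, so $x$ is a solution of \eqref{LDE} there; being bounded, it satisfies $\sup_{\tau\ge t}\|T(\tau,t)x_t\|=\sup_{\tau\ge t}\|x_\tau\|<\infty$, i.e.\ $x_t\in\mathcal S(t)$. On the other hand $z\equiv0$ on $[0,t]$, so $w$ solves the homogeneous equation \eqref{LDE} on $[s,t]$ (note $[s-r,t]\subseteq[-r,\infty)$ since $s\ge0$), and uniqueness of solutions gives $w_t=T(t,s)w_s\in T(t,s)C$. Since $\phi=v_t=x_t+w_t$, this is the required splitting, and \eqref{SPL} follows. (When $r=0$ the statement is immediate, as $T(t,s)$ is then invertible, but the argument above also applies verbatim.)

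The step I expect to be the main obstacle is guaranteeing that the defect $z$ is \emph{continuous} on $[0,\infty)$ after the extension by zero, because Claim~\ref{LEM1} only supplies solutions for continuous nonhomogeneities. This is precisely the reason why $v$ must be made to coincide with the true forward solution $u$ on an entire interval to the right of $t$ --- i.e.\ why the cut-off $\chi$ has to be identically equal to $1$ near $t$ and not merely close to $1$: only then does $z$ vanish on a neighbourhood of $t$ and glue continuously to its zero extension on $[0,t]$. The remaining points (boundedness of $v$ and of $z$, the cancellation yielding the homogeneous equation for $x$, and the identity $w_t=T(t,s)w_s$) are routine verifications.
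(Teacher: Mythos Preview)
Your proposal is correct and follows essentially the same strategy as the paper's proof: multiply the forward solution $u$ from $\phi$ at time $t$ by a cutoff that is identically $1$ near $t$ and vanishes far out, observe that the resulting defect $z$ is bounded, continuous, and vanishes on $[0,t]$, invoke the Perron property (Claim~\ref{LEM1}) to produce a globally bounded solution $w$ of the same nonhomogeneous equation, and split $\phi=v_t=(v_t-w_t)+w_t$ with $v_t-w_t\in\mathcal S(t)$ and $w_t=T(t,s)w_s\in T(t,s)C$. The only cosmetic differences are that the paper first reduces to $s=0$ and uses the antiderivative $\bar\psi(\tau)=\int_\tau^\infty\psi$ of a bump as its cutoff (so that $\bar\psi\equiv1$ only on $[t-r,t]$ and continuity of $z$ at $t$ requires a one-line check), whereas your $\chi\equiv1$ on $[t-r,t+1]$ makes $z$ vanish on a whole right neighbourhood of $t$, rendering that continuity check automatic; your observation that $z$ need only vanish for $\tau\ge t+2+r$ (not $t+2$) is the one place where the phrasing could be tightened, but it does not affect the argument.
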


\begin{proof}[Proof of Claim~\ref{ee}]
It is sufficient to prove the claim for $s=0$. Indeed, assuming that the desired conclusion holds for $s=0$, we now take an arbitrary $s>0$. Then, for every $t\geq s$ and $\phi \in C$, there exist $\phi_1\in C$ and $\phi_2\in \mathcal S(t)$ such that $\phi=T(t,0)\phi_1+\phi_2$. Hence,
\[
\phi=T(t,0)\phi_1+\phi_2=T(t,s)T(s,0)\phi_1+\phi_2\in T(t,s)C+\mathcal S(t).
\]
Thus,~\eqref{SPL} holds.
Therefore, from now on, we suppose that $s=0$. Since $T(0,0)=\Id$, for every $\phi\in C$, we have
\[
\phi=T(0,0)\phi+0\in T(0,0)C+\mathcal S(0).
\]
Thus,~\eqref{SPL} holds for $t=s=0$. Now suppose that $t>0$ and let
 $\phi \in C$ be arbitrary. Choose a continuous function $\psi \colon [t-r,\infty) \to [0,\infty)$ with compact support contained in $[t, \infty)$ such that $\int_t^\infty \psi(\tau)\, d\tau=1$.  Let $u$ denote the unique solution of Eq.~\eqref{LDE} on $[t, \infty)$ with initial value $u_t=\phi$. Set
 \[
 \bar{\psi}(\tau):=\int_\tau^\infty \psi (\tau)\, d\tau\qquad\text{ for $\tau\geq t-r$}.
 \]
Finally, define $v\colon [t-r, \infty)\to \R^d$ and $z\colon [-r, \infty)\to \R^d$ by 
\[
v(\tau)=
\bar{\psi} (\tau)u(\tau)
 \qquad \text{for $\tau\ge t-r$}
\]
and
\[
z(\tau)=\begin{cases}
-\psi(\tau)u(\tau)+\bar{\psi}(\tau)L(\tau)u_\tau-L(\tau)(\bar{\psi}_\tau u_\tau) &\quad\text{for $\tau\ge t$}, \\
0&\quad\text{for $\tau\in[-r,t)$},
\end{cases}
\]
respectively.
Note that  $z$ is continuous on $[-r,\infty)$. The continuity of~$z$ at~$t$ follows from the relations $\psi(t)=0$, $\bar{\psi}(t)=1$, $\bar\psi_t\equiv1$ and $\bar{\psi}_t u_t=u_t$, which imply that $z(t)=0$. Since $\psi$ has compact support, we have that $\psi(\tau)=\bar\psi(\tau)=0$ for all large $\tau$. Hence, 
\[
\sup_{
\tau\ge t-r}|v(\tau)|<\infty\qquad\text{and}\qquad 
\sup_{\tau\ge -r} |z(\tau)|<\infty. 
\]
By Claim~\ref{LEM1}, there exists a   continuous function $x\colon [-r, \infty)\to \R^d$ which is differentiable on $[0, \infty)$ such that $\sup_{\tau\ge -r}|x(\tau)|<\infty$ and~\eqref{adm} holds. It is straightforward to show that 
\[
v'(\tau)=L(\tau)v_\tau+z(\tau), \qquad \tau\ge t.
\]
Hence, $x-v$ is a solution of Eq.~\eqref{LDE} on $[t, \infty)$ and thus
\begin{equation*}\label{g}
x_\tau-v_\tau=T(\tau,t)(x_t-v_t)=T(\tau,t)(x_t-\phi), \qquad \tau\ge t.
\end{equation*}
 From this and the boundedness of~$x$ and~$v$,  we conclude that $x_t-\phi \in \mathcal S(t)$. On the other hand, since $z\equiv 0$ on $[0, t]$, we have that $x_t=T(t, 0)x_0$. This implies that
\[
\phi=x_t+(\phi-x_t)=T(t,0)x_0+(\phi-x_t)\in T(t,0)C+ \mathcal S(t).
\]
Since $\phi \in C$ was arbitrary, we conclude that~\eqref{SPL} holds for $s=0$.
\end{proof}

\begin{Claim}\label{subcomplete}
For each $s\ge 0$, $\mathcal S(s)$ is the image of a  Banach space under the action of a bounded linear operator.
\end{Claim}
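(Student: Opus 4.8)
The plan is to take $\mathcal S(s)$ itself, re-normed so as to be complete, as the Banach space in the statement, with the inclusion map into $C$ playing the role of the bounded operator. For $\phi\in\mathcal S(s)$ I would set
\[
\|\phi\|_{\mathcal S(s)}:=\sup_{t\ge s}\|T(t,s)\phi\|,
\]
which is finite exactly by the definition of $\mathcal S(s)$. This is a norm on $\mathcal S(s)$: positive homogeneity and subadditivity are inherited from the linearity of the operators $T(t,s)$, and since $T(s,s)=\Id$ one has $\|\phi\|_{\mathcal S(s)}\ge\|\phi\|$, so it separates points. The inclusion $\iota\colon\bigl(\mathcal S(s),\|\cdot\|_{\mathcal S(s)}\bigr)\to C$ then satisfies $\|\iota\phi\|\le\|\phi\|_{\mathcal S(s)}$, hence is linear and bounded, and has image precisely $\mathcal S(s)$. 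So the whole claim reduces to verifying that $\bigl(\mathcal S(s),\|\cdot\|_{\mathcal S(s)}\bigr)$ is complete.

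For completeness I would take a $\|\cdot\|_{\mathcal S(s)}$-Cauchy sequence $(\phi_n)$. Since $\|\cdot\|\le\|\cdot\|_{\mathcal S(s)}$, it is also Cauchy in $C$ and hence converges in $C$ to some $\phi$; moreover, for each fixed $t\ge s$, boundedness of $T(t,s)\colon C\to C$ gives $T(t,s)\phi_n\to T(t,s)\phi$ in $C$. Then, given $\epsilon>0$, pick $N$ with $\sup_{t\ge s}\|T(t,s)(\phi_n-\phi_m)\|\le\epsilon$ for all $n,m\ge N$; fixing $n\ge N$ and $t\ge s$ and letting $m\to\infty$ yields $\|T(t,s)(\phi_n-\phi)\|\le\epsilon$, hence $\sup_{t\ge s}\|T(t,s)(\phi_n-\phi)\|\le\epsilon$. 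This shows $\phi-\phi_N\in\mathcal S(s)$, so $\phi=\phi_N+(\phi-\phi_N)\in\mathcal S(s)$, and also that $\|\phi_n-\phi\|_{\mathcal S(s)}\le\epsilon$ for $n\ge N$, i.e.\ $\phi_n\to\phi$ in $\|\cdot\|_{\mathcal S(s)}$. Hence $\mathcal S(s)$ endowed with this norm is a Banach space, and the claim follows.

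I do not expect a genuine obstacle here; the one place needing a little care is the last step, namely ensuring that the $C$-limit $\phi$ actually belongs to $\mathcal S(s)$ and that convergence improves to the stronger norm, which is the routine ``freeze the tail index, then let the running index tend to infinity'' argument used above. As an alternative route one could instead take the space $X_s$ of all bounded solutions of \eqref{LDE} on $[s-r,\infty)$ endowed with $\sup_{t\ge s-r}|x(t)|$, together with the evaluation operator $x\mapsto x_s\in C$, whose image is again $\mathcal S(s)$; completeness of $X_s$ would follow because a uniform limit of solutions is a solution, using the continuity of $L$ and the boundedness of $\|L(\cdot)\|$ on compact $t$-intervals.
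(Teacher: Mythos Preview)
Your proof is correct; the renormed space $(\mathcal S(s),\|\cdot\|_{\mathcal S(s)})$ is isometrically isomorphic, via $\phi\mapsto(\text{the solution on }[s-r,\infty)\text{ with }x_s=\phi)$, to the space of bounded solutions on $[s-r,\infty)$ with the sup norm, and the latter---together with the evaluation $x\mapsto x_s$---is exactly the Banach space and operator the paper uses, so your ``alternative route'' is in fact the paper's own argument. The completeness verification is the same in substance: the paper identifies the limit as a solution via continuous dependence on initial data, while you pass to the limit using boundedness of each individual $T(t,s)$, which amounts to the same thing.
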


\begin{proof}[Proof of Claim~\ref{subcomplete}]
Fix $s\ge 0$ and 
let $\mathcal B$ denote the Banach space of all bounded and continuous functions $x\colon [s-r, \infty)\to \mathbb R^d$ equipped with the supremum norm,
\[
\|x\|_{\mathcal B}:=\sup_{t\ge s-r}|x(t)|<\infty,\qquad x\in\mathcal B.
\]
Let $\mathcal B'$ denote the set of all $x\in \mathcal B$ which are solutions of~\eqref{LDE} on $[s, \infty)$. We will show that $\mathcal B'$ is a closed subspace of $\mathcal B$. Let $(x^k)_{k\in \N}$ be a sequence in~$\mathcal B'$ such that $x^k \to y$ in $\mathcal B$ for some $y\in\mathcal B$. Then, $x^k\to y$ pointwise on $[s-r,\infty)$
and $x^k_s\to y_s$ in~$C$ as $k\to\infty$. From the last limit relation, it follows by the continuous dependence of the solutions on the initial data (see \cite[Sec.~6.1, Corollary~1.1, p.~143]{Hale}) that $x^k\to x$ locally uniformly on $[s-r,\infty)$ as $k\to\infty$, where~$x$ is the unique solution of Eq.~\eqref{LDE} on $[s,\infty)$ with initial value $x_s=y_s$. It follows from the uniqueness of the limit of $(x^k)_{k\in \N}$ that $y=x$ identically on $[s-r,\infty)$ and hence $y\in\mathcal B'$.
 This shows that $\mathcal B'$ is a closed subspace of $\mathcal B$ and hence it is a Banach space. Now define $\Phi \colon \mathcal B' \to C$ by $\Phi(x)=x_s$ for $x\in\mathcal B'$. Clearly, $\Phi$ is a bounded linear operator with 
$\| \Phi \| \le 1$ and $\Phi(\mathcal B')=\mathcal S(s)$.
\end{proof}

\begin{Claim}\label{covariance}
The stable subspace~$\mathcal S(0)$ of Eq.~\eqref{LDE} is closed and has finite codimension in~$C$.
\end{Claim}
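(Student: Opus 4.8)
The plan is to obtain Claim~\ref{covariance} by recognizing the stable subspaces $(\mathcal S(n))_{n\in\N}$ at integer times as a covariant sequence for the discretized evolution family and applying Sch\"affer's theorem on regular covariant subspaces for difference equations with compact coefficient operators. Concretely, set $U_n := T(n+1,n)\colon C\to C$ for $n\in\N$. Since $r<\infty$ and the solution operator of a delay equation becomes compact once time exceeds the delay (eventual compactness; see the discussion after Theorem~\ref{T1} and \cite[Chap.~3--6]{Hale}), each $U_n$ — being a composition of $\lceil r\rceil+1$ solution operators over unit steps, or directly $T(n+1,n)$ when $r\le 1$, and in general $T(n+r+1,n)$ which is compact — can be arranged to be a compact operator on $C$; I would phrase the discretization so that the one-step maps are compact (e.g.\ by taking the step length to exceed $r$, then relabeling). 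Then Claim~\ref{INV} gives exactly the covariance relation $U_n^{-1}(\mathcal S(n+1)) = \mathcal S(n)$, i.e.\ $(\mathcal S(n))_n$ is a covariant sequence for $(U_n)_n$, and Claim~\ref{ee} gives the complementation-type relation $C = U_n C + \mathcal S(n+1)$, so the sequence is \emph{regular} in Sch\"affer's sense.

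The second ingredient is that $\mathcal S(0)$ is the continuous image of a Banach space (Claim~\ref{subcomplete}); this is the hypothesis in Sch\"affer's result that forces the covariant subspaces to actually be \emph{closed} with \emph{constant finite codimension}, rather than merely dense or of infinite codimension. So the argument is: by Claims~\ref{INV}, \ref{ee}, \ref{subcomplete} the hypotheses of Sch\"affer's theorem \cite{Schaffer} are met for the compact-coefficient difference equation $\xi_{n+1}=U_n\xi_n$ with the covariant sequence $\mathcal S(n)$; therefore each $\mathcal S(n)$ is closed and $\dim C/\mathcal S(n)$ is finite and independent of $n$. In particular $\mathcal S(0)$ is closed of finite codimension in $C$, which is the assertion. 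I would also note that once $\mathcal S(n)$ is closed for integer $n$, closedness of $\mathcal S(s)$ for all real $s\ge 0$ follows from Claim~\ref{INV} (pulling back the closed set $\mathcal S(\lceil s\rceil)$ by the continuous operator $T(\lceil s\rceil,s)$), though for the present claim only $s=0$ is needed.

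The main obstacle is the bookkeeping around eventual compactness: one must make sure the one-step operators of the chosen discretization really are compact, which requires the step to be at least the delay $r$ (or a two-stage argument: $T(t,s)$ maps $C$ into a set of equi-Lipschitz functions once $t-s\ge r$, hence into a relatively compact subset of $C$ by Arzel\`a--Ascoli), and then verifying that Claims~\ref{INV} and~\ref{ee} transfer verbatim to this relabeled family — which they do, since those claims were proved for all $t\ge s\ge 0$, not just unit steps. A secondary point of care is citing Sch\"affer's theorem in exactly the form needed: the statement about \emph{regular covariant sequences} of compact operators in a Banach space, with the "image of a Banach space" hypothesis, yielding closedness and constant finite codimension; I would quote \cite{Schaffer} precisely and check that uniform boundedness~\eqref{bc} — which bounds $\|T(n+1,n)\|$ uniformly via Gr\"onwall — is what lets the compactness and the constants be handled uniformly in $n$.
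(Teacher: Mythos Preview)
Your proposal is correct and follows the same route as the paper: discretize with step length at least the delay so the one-step maps are compact (the paper takes $A(n)=T((n+1)r,nr)$ and $Y(n)=\mathcal S(nr)$), then use Claims~\ref{INV}, \ref{ee}, \ref{subcomplete} to verify that $(Y(n))_{n\ge 0}$ is a subcomplete, algebraically regular covariant sequence and apply Sch\"affer's lemma. One minor correction: hypothesis~\eqref{bc} is not needed at this step---compactness of $T(t,s)$ for $t\ge s+r$ requires only continuity of $L$ on compact time intervals, and Sch\"affer's lemma (as stated) asks merely that \emph{some} transition operator $U(n,m)$ be compact, with no uniform bound on the $A(n)$.
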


As noted above, the proof of Claim~\ref{covariance} will be based on Sch\"affer's result about closed covariant sequences associated with compact linear operators in a Banach space~\cite{Schaffer}. Before we formulate Sch\"affer's result, let us recall the following notions.

Let~$X$ be a Banach space.
A subspace $S$ of~$X$ is called \emph{subcomplete in~$X$} if there exist a Banach space~$Z$ and a bounded linear operator $\Phi\colon Z\to X$ such that $\Phi(Z)=S$.

Let $A\colon\N_0\to\mathcal L(X)$ be an operator-valued map, where $\N_0$ and $\mathcal L(X)$ denote the set of nonnegative integers and the space of bounded linear operators in~$X$, respectively. For $n\geq m\geq0$, the corresponding \emph{transition operator} $U(n,m)\colon X\to X$ is defined by
\[
U(n,m)=A(n-1)A(n-2)\cdots A(m)\qquad\text{for $n>m\geq0$}
\]
and $U(m,m)=\Id$ for $m\geq0$. A sequence $Y=(Y(n))_{n\in\N_0}$ of subspaces in~$X$ is called a \emph{covariant sequence for~$A$} if 
\[
[\,A(n)\,]^{-1}(Y(n+1))=Y(n) \qquad\text{for all $n\in \N_0$}.
\]
A covariant sequence~$Y=(Y(n))_{n\in\N_0}$ for~$A$ is called \emph{algebraically regular} if
\[
U(n,0)X+Y(n)=X\qquad\text{for each $n\in \N_0$}.
\]
Finally, a covariant sequence~$Y=(Y(n))_{n\in\N_0}$ for~$A$ is called \emph{subcomplete} if the subspace $Y(n)$ is subcomplete in~$X$ for all $n\in\N_0$.

In the proof of Claim~\ref{covariance}, we will use the following result due to Sch\"affer.

\begin{lemma}\label{schaef}
{\rm (\cite[Lemma~3.4]{Schaffer})}
Let $X$ be a Banach space and $A\colon\N_0\to\mathcal L(X)$. Suppose that 
$Y=(Y(n))_{n\in\N_0}$ is a subcomplete algebraically regular covariant sequence for~$A$. If the transition operator $U(n,m)\colon X\to X$ is compact for some $n,m\in\N_0$, $n\geq m$, then the subspaces $Y(n)$, $n\in\N_0$, are closed and have constant finite codimension in~$X$.
\end{lemma}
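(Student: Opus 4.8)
The plan is to separate the algebraic content (covariance plus regularity) from the analytic content (compactness), and to reduce everything to the standard fact that the class of bounded operators with closed range and finite-dimensional cokernel is stable under compact perturbations.

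First I would record the iterated covariance relation $[U(n,m)]^{-1}(Y(n))=Y(m)$ for all $n\ge m$, obtained by induction on $n-m$ from the one-step identity $[A(k)]^{-1}(Y(k+1))=Y(k)$ together with $U(n+1,m)=U(n+1,m+1)\circ A(m)$. Combining this (with $m=0$) with algebraic regularity, the composition $q_n\circ U(n,0)\colon X\to X/Y(n)$, where $q_n$ is the purely algebraic quotient map, is surjective (because $U(n,0)X+Y(n)=X$) and has kernel exactly $Y(0)$; hence $U(n,0)$ induces a vector-space isomorphism $X/Y(0)\cong X/Y(n)$ for every $n\in\N_0$. In particular $\operatorname{codim}Y(n)$ is the same cardinal for all $n$, so it will suffice to prove that it is finite for one index and that each $Y(n)$ is closed.

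For the analytic heart, I would fix an index $N\ge n_0$, where $U(n_0,m_0)$ is the given compact transition operator; then $T:=U(N,0)=U(N,n_0)\,U(n_0,m_0)\,U(m_0,0)$ is compact, being a bounded operator composed with a compact one. Using subcompleteness, write $Y(N)=\Phi(Z)$ with $Z$ a Banach space and $\Phi\in\mathcal L(Z,X)$, and consider $\Lambda\colon X\oplus Z\to X$, $\Lambda(x,z)=Tx+\Phi z$. By algebraic regularity $\operatorname{ran}\Lambda=TX+Y(N)=X$, so $\Lambda$ is a surjective bounded operator between Banach spaces; in particular $\Lambda$ has closed range and zero cokernel, i.e. it lies in the class $\Phi_-$ of lower semi-Fredholm operators. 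The operator $K\colon X\oplus Z\to X$, $K(x,z)=Tx$, is compact (it is $T$ composed with a projection), and $\Lambda-K$ is the map $(x,z)\mapsto\Phi z$, whose range is precisely $Y(N)$. Since $\Phi_-$ is invariant under compact perturbations, $\Lambda-K\in\Phi_-$, so $Y(N)$ is closed in $X$ and has finite codimension; by the constancy established above, every $Y(n)$ then has the same finite codimension.

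It remains to show $Y(n)$ is closed for the finitely many indices $n<n_0$, which follows by downward induction from $n_0$: if $Y(n+1)$ is closed then $Y(n)=[A(n)]^{-1}(Y(n+1))$ is the preimage of a closed set under the bounded operator $A(n)$, hence closed. (Alternatively one could argue that a subcomplete subspace of finite codimension is automatically closed, applying the open mapping theorem to $(z,f)\mapsto\Phi z+f$ on $Z\oplus F$ for a finite-dimensional algebraic complement $F$.) The step I expect to be the main obstacle is the packaging in the previous paragraph: recognizing that algebraic regularity together with subcompleteness should be read as surjectivity of $\Lambda$ on the product space $X\oplus Z$, with $Y(N)$ reappearing as the range of the compact perturbation $\Lambda-K$. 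Once that is in place the proof rests on standard semi-Fredholm perturbation theory, and the only remaining care is to make sure a compact transition operator is available at an index from which closedness propagates to all $n$ (the case where the compact transition is the identity, forcing $X$ finite-dimensional, being trivial).
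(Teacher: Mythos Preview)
The paper does not supply its own proof of this lemma; it is quoted from Sch\"affer and invoked as a black box, so there is no in-paper argument to compare against.

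Your proof is correct. The reduction is clean: algebraic regularity together with subcompleteness says precisely that $\Lambda(x,z)=U(N,0)x+\Phi z$ is a bounded surjection $X\oplus Z\to X$, and stripping off the compact piece $K(x,z)=U(N,0)x$ leaves an operator whose range is $Y(N)$; stability of the lower semi-Fredholm class $\Phi_-$ under compact perturbations then yields that $Y(N)$ has finite codimension (closedness of the range is then automatic for bounded operators between Banach spaces). The constancy of the codimension via the induced vector-space isomorphism $X/Y(0)\cong X/Y(n)$ is routine, and your alternative for the remaining indices---subcomplete plus finite codimension forces closedness, via the open mapping theorem applied to $(z,f)\mapsto\Phi z+f$ on $Z\oplus F$---is tidier than the downward induction and works uniformly for all~$n$. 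This is essentially the mechanism one would expect in Sch\"affer's original argument.
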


We shall also need the following known result about the compactness of the solution operators of Eq.~\eqref{LDE}.

\begin{lemma}\label{compcrit}
{\rm (\cite[Chap.~3, Sec.~3.6]{Hale})}
$T(t,s)\colon C\to C$ is a compact operator whenever $s\ge 0$ and $t\ge s+r$.
\end{lemma}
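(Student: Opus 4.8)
The plan is to prove Lemma~\ref{compcrit} directly by the Arzel\`a--Ascoli theorem: one shows that $T(t,s)$ carries an arbitrary bounded set of $C$ onto an equibounded and equicontinuous --- hence relatively compact --- subset of $C$. Fix $s\ge 0$ and $t\ge s+r$. Since $L$ is continuous on $[0,\infty)$, the quantity $M_{s,t}:=\sup_{s\le\tau\le t}\|L(\tau)\|$ is finite, and this local bound is all that is needed (so the lemma does not require the global hypothesis~\eqref{bc}). Let $B\subset C$ be bounded, say $\|\phi\|\le\rho$ for all $\phi\in B$, and for $\phi\in B$ let $x=x(\cdot\,;\phi)$ be the solution of~\eqref{LDE} on $[s,\infty)$ with $x_s=\phi$.

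The first step is an equiboundedness estimate via Gr\"onwall's inequality, where some care is needed because the term $x_\sigma$ in~\eqref{LDE} is nonlocal. Introduce the running supremum $m(\tau):=\sup_{s-r\le\sigma\le\tau}|x(\sigma)|$ for $\tau\ge s$, so that $m(s)=\|\phi\|\le\rho$. Integrating~\eqref{LDE} from $s$ to $\tau\in[s,t]$ and using $|L(\sigma)x_\sigma|\le M_{s,t}\|x_\sigma\|\le M_{s,t}\,m(\sigma)$ yields $|x(\tau)|\le\rho+M_{s,t}\int_s^\tau m(\sigma)\,d\sigma$; since the right-hand side is nondecreasing in $\tau$, the same bound holds with $|x(\tau)|$ replaced by $m(\tau)$, and Gr\"onwall's lemma gives $m(\tau)\le\rho\,e^{M_{s,t}(t-s)}=:K$ for all $\tau\in[s,t]$. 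In particular $\|T(t,s)\phi\|=\|x_t\|\le K$ for every $\phi\in B$.

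The second step, which is where the hypothesis $t\ge s+r$ is essential, is equicontinuity. The function $T(t,s)\phi$ is $\theta\mapsto x(t+\theta)$ for $\theta\in[-r,0]$, and $t+\theta\in[t-r,t]\subseteq[s,t]$; hence $x$ is differentiable at every such point with $|x'(t+\theta)|=|L(t+\theta)x_{t+\theta}|\le M_{s,t}\|x_{t+\theta}\|$. Moreover $t+\theta\ge t-r\ge s$, so the segment $x_{t+\theta}$ only involves values $x(\sigma)$ with $\sigma\ge t+\theta-r\ge s-r$, whence $\|x_{t+\theta}\|\le m(t+\theta)\le K$. Therefore $|x'(t+\theta)|\le M_{s,t}K$ for all $\theta\in[-r,0]$, uniformly in $\phi\in B$, so the functions $T(t,s)\phi$, $\phi\in B$, are uniformly Lipschitz on $[-r,0]$ with constant $M_{s,t}K$, which gives equicontinuity. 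By Arzel\`a--Ascoli, $\{T(t,s)\phi:\phi\in B\}$ is relatively compact in $C$; as $B$ was an arbitrary bounded set, $T(t,s)$ is a compact operator.

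There is no deep obstacle in this argument; the two points to get right are the bookkeeping in the Gr\"onwall estimate (one must pass to the running supremum $m$ rather than estimate $|x|$ directly, because of the delay), and the elementary but crucial observation that $t\ge s+r$ forces every argument $t+\theta$ with $\theta\in[-r,0]$ into the interval $[s,t]$ on which $x$ solves the equation, so that the uniform derivative bound --- and hence equicontinuity of the image --- is available. Without this, the image segment $x_t$ would still contain a portion of the merely continuous initial datum $\phi$, and no equicontinuity could be deduced; this is exactly why compactness is claimed only for $t\ge s+r$.
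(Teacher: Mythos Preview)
Your argument is correct and is essentially the standard Arzel\`a--Ascoli proof found in Hale's book, which is exactly what the paper invokes: the paper does not give its own proof of this lemma but simply cites \cite[Chap.~3, Sec.~3.6]{Hale}. Your write-up handles the two delicate points (passing to the running supremum before applying Gr\"onwall, and using $t\ge s+r$ to ensure the entire segment $x_t$ lies in the region where the differential equation holds) carefully and accurately.
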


Now we can give a short proof of Claim~\ref{covariance}.
\begin{proof}[Proof of Claim~\ref{covariance}]
Claims~\ref{INV}, \ref{ee} and~\ref{subcomplete} guarantee that the stable subspaces $Y(n):=\mathcal S(nr)\subset C$ of Eq.~\eqref{LDE} 
form a subcomplete algebraically regular covariant sequence for $A\colon\N_0\to\mathcal L(C)$ defined by
\[
A(n):=T((n+1)r, nr), \qquad n\in \N_0.
\]
According to Lemma~\ref{compcrit}, for each $n\in\N_0$, $A(n)\colon C\to C$ is compact. By the application of Lemma~\ref{schaef}, we conclude that $Y(0)=\mathcal S(0)$ is closed and has finite codimension in~$C$.
\end{proof}

By Claim~\ref{covariance}, the stable subspace~$\mathcal S(0)$ is closed and has finite codimension in~$C$. This implies that~$\mathcal S(0)$ is complemented in~$C$ (see, e.g., \cite[Lemma~4.21, p.~106]{Rud}). More precisely,  there exists a subspace~$\mathcal U$ of~$C$ such that $\dim\mathcal U=\operatorname{codim}\mathcal S(0)<\infty$ and
\begin{equation}\label{SPLIT}
C=\mathcal S(0)\oplus \mathcal U.
\end{equation}
\begin{Claim}\label{LEM2}
For each bounded and continuous function $z\colon [0, \infty)\to \R^d$,  
there exists a unique bounded and continuous function $x\colon [-r, \infty)\to \R^d$ with $x_0\in \mathcal U$ which is differentiable on $[0, \infty)$ and satisfies ~\eqref{adm}.   Moreover, there exists a constant $A>0$, independent of $z$, such that 
\begin{equation}\label{cgt}
\sup_{t\ge -r}|x(t)| \le A\sup_{t\ge 0}|z(t)|.
\end{equation}
\end{Claim}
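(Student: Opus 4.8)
The plan is to split the statement into an existence-and-uniqueness part and an a priori bound, and to obtain the bound by a closed-graph (or uniform boundedness) argument rather than by an explicit Green's function computation. For existence, given a bounded continuous $z$, Claim~\ref{LEM1} furnishes some bounded solution $\tilde x$ of~\eqref{adm} on $[-r,\infty)$. Using the decomposition~\eqref{SPLIT}, write $\tilde x_0=\phi_s+\phi_u$ with $\phi_s\in\mathcal S(0)$ and $\phi_u\in\mathcal U$, let $w$ be the (bounded, by definition of $\mathcal S(0)$) solution of the homogeneous equation~\eqref{LDE} with $w_0=\phi_s$, and set $x:=\tilde x-w$. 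Then $x$ solves~\eqref{adm}, is bounded, and has $x_0=\phi_u\in\mathcal U$. For uniqueness: if $x^{(1)},x^{(2)}$ are two such solutions, their difference $h$ is a bounded solution of the homogeneous equation with $h_0\in\mathcal U$; boundedness forces $h_0\in\mathcal S(0)$, so $h_0\in\mathcal S(0)\cap\mathcal U=\{0\}$, whence $h\equiv0$ on $[-r,\infty)$ by forward uniqueness for~\eqref{LDE}.

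For the a priori estimate~\eqref{cgt}, I would work in the Banach space $\mathcal B$ of bounded continuous functions on $[-r,\infty)$ with the sup norm (as in Claim~\ref{subcomplete}, with $s=0$), and the Banach space $\mathcal Z$ of bounded continuous functions on $[0,\infty)$ into $\R^d$, again with the sup norm. Define the solution map $\mathcal G\colon \mathcal Z\to \mathcal B$ sending $z$ to the unique $x$ produced above; linearity of $\mathcal G$ follows from the uniqueness just established together with linearity of~\eqref{adm} and of the decomposition~\eqref{SPLIT}. The estimate~\eqref{cgt} is precisely the assertion that $\mathcal G$ is bounded, with $A=\|\mathcal G\|$. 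To get boundedness, I would invoke the closed graph theorem: suppose $z^k\to z$ in $\mathcal Z$ and $\mathcal G(z^k)=x^k\to y$ in $\mathcal B$. Then $x^k\to y$ uniformly, hence pointwise, so $x^k_0\to y_0$ in $C$; since each $x^k_0\in\mathcal U$ and $\mathcal U$ is finite-dimensional (hence closed), $y_0\in\mathcal U$. Moreover $x^k$ satisfies~\eqref{adm} with nonhomogeneity $z^k$; passing to the limit (using that $L(t)$ is bounded, $L(\cdot)$ continuous, and $x^k_t\to y_t$ locally uniformly, so the integrated form $x^k(t)=x^k(0)+\int_0^t\bigl(L(\sigma)x^k_\sigma+z^k(\sigma)\bigr)\,d\sigma$ passes to the limit) shows $y$ satisfies~\eqref{adm} with nonhomogeneity $z$ and $y_0\in\mathcal U$. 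By the uniqueness part, $y=\mathcal G(z)$, so the graph of $\mathcal G$ is closed and $\mathcal G$ is bounded.

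The main obstacle is the closed-graph verification, and more specifically making sure the limit $y$ is genuinely a bounded solution of~\eqref{adm}: one has pointwise and local-uniform convergence of the $x^k$, but one must check that the differential equation survives the limit and that the resulting $y$ lies in $\mathcal B$ (its boundedness is immediate from convergence in $\mathcal B$) with $y_0\in\mathcal U$ (here finite-dimensionality of $\mathcal U$ is what is used). It is worth noting that the finite-dimensionality of $\mathcal U$, established via Claim~\ref{covariance}, is essential precisely at this point — it is what guarantees $\mathcal U$ is closed so that the limit of initial data stays in $\mathcal U$. Everything else — existence, uniqueness, linearity — is routine bookkeeping with~\eqref{SPLIT} and forward uniqueness for delay equations.
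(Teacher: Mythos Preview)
Your proposal is correct and follows essentially the same route as the paper: existence via Claim~\ref{LEM1} and the splitting~\eqref{SPLIT}, uniqueness via $\mathcal S(0)\cap\mathcal U=\{0\}$, and the bound~\eqref{cgt} via the Closed Graph Theorem applied to the solution map $z\mapsto x$. The only cosmetic difference is that the paper verifies closedness by invoking continuous dependence on data (Hale) rather than passing to the limit in the integrated form, but these are equivalent.
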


\begin{proof}[Proof of Claim~\ref{LEM2}]
Let $z\colon [0, \infty)\to \R^d$ be an arbitrary bounded and continuous function.
By Claim~\ref{LEM1}, there exists a bounded and continuous function $\tilde x\colon [-r, \infty)\to \R^d$ which is differentiable on $[0, \infty)$ and satisfies
\[
\tilde x'(t)=L(t)\tilde x_t+z(t), \qquad t\ge 0.
\]
On the other hand, \eqref{SPLIT} implies the existence of~$\phi_1\in \mathcal S(0)$ and $\phi_2\in \mathcal U$ such that 
\[
\tilde x_0=\phi_1+\phi_2.
\]
Define $x\colon [-r, \infty)\to \R^d$ by 
\[
x(t)=\tilde x(t)-y(t), \qquad t\ge -r,
\]
where $y$ is a solution of Eq.~\eqref{LDE} such that $y_0=\phi_1$. Since $y_0=\phi_1\in\mathcal S(0)$, we have that $\sup_{t\geq -r} |y(t)|<+\infty$. Then, $x$ satisfies~\eqref{adm}, $x_0=\tilde x_0-\phi_1=\phi_2\in \mathcal U$ and $\sup_{t\ge -r}|x(t)|<+\infty$. 
We claim that $x$ with the desired properties is unique. Indeed, if $\bar x$ is an arbitrary function with the desired properties, then  $x-\bar x$ is a bounded solution of Eq.~\eqref{LDE}, which implies that $x_0-\bar x_0\in \mathcal U\cap \mathcal S(0)=\{0\}$. Thus, $x_0=\bar x_0$ and hence $x=\bar x$ identically on $[-r,\infty)$.

Finally, we show the existence of a constant $A>0$ such that~\eqref{cgt} holds. 
Let $\mathcal B_0$ and $\mathcal B_{-r}$ denote the Banach space of all bounded and continuous $\R^d$-valued functions defined on $[0,\infty)$ and
$[-r,\infty)$, respectively, equipped with the supremum norm.
For $z\in\mathcal B_0$, define 
$\mathcal F(z)=x$, where $x$ is the unique bounded solution of the nonhomogeneous equation~\eqref{adm} with $x_0\in\mathcal U$.
(The existence and uniqueness of~$x$ is guaranteed by the first part of the proof.) Evidendly, $\mathcal F(z)=x\in\mathcal B_{-r}$ for $z\in\mathcal B_0$ and $\mathcal F \colon \mathcal B_0\to \mathcal B_{-r}$ is a linear operator.
We will show that $\mathcal F$ is a closed operator. Let  $(z^k)_{k\in \N}$ be a sequence in $\mathcal B_0$ such that $z^k\to z$ for some $z\in\mathcal B_0$ and $x^k:=\mathcal F (z^k) \to x$ for some $x\in\mathcal B_{-r}$. Using a similar notation as in~\cite[Sec.~6.2]{Hale}, the symbol $x(0,\phi,z)$ will denote the unique solution of the nonhomogeneous equation~\eqref{adm} with initial value~$\phi$ at zero, where $\phi\in C$ and $z\in\mathcal B_0$. Thus, $x^k=x(0,\phi^k,z^k)$, where $\phi^k:=x^k_0$ for $k\in\mathbb N$. Since $z^k\to z$ in~$\mathcal B_0$ and the limit relation $x^k\to x$ in $\mathcal B_{-r}$ implies that $\phi^k=x^k_0\to x_0=:\phi$ in~$C$, by the continuous dependence of the solutions of~\eqref{adm} on the parameters (see, e.g., \cite[Sec.~6.1, Corollary~1.1, p.~143]{Hale}), we have for $t\geq -r$, 
\[
x(t)=\lim_{k\to\infty}x^k(t)=\lim_{k\to\infty}x(0,\phi^k,z^k)(t)=x(0,\phi,z)(t).
\]
Thus, $x$ is a solution of~\eqref{adm}. Since $x^k_0\in\mathcal U$ for $k\in\mathbb N$ and $\mathcal U$ is a finite-dimensional and hence closed subset of~$C$, we have that $x_0=\lim_{k\to\infty}x^k_0\in\mathcal U$. Therefore, $x\in\mathcal B_{-r}$ is a bounded solution of~\eqref{adm} with $x_0\in\mathcal U$. Hence $\mathcal F(z)=x$ which shows that 
$\mathcal F \colon \mathcal B_0\to \mathcal B_{-r}$
is a closed operator. According to the Closed Graph Theorem (see, e.g., \cite[Theorem~4.2-I, p.~181]{Taylor}), $\mathcal F$ is bounded, which implies that~\eqref{cgt} holds with $A=\|\mathcal F\|$, the operator norm of~$\mathcal F$.
\end{proof}
For $s\ge 0$, define
\[ \mathcal U(s)=T(s,0)\mathcal U
\]
so that $\mathcal U(0)=\mathcal U$.
It is easily seen that 
\begin{equation}\label{eq: invariance S U}
T(t,s)\mathcal S(s)\subset \mathcal S(t) \quad \text{and} \quad T(t,s)\mathcal U(s)=\mathcal U(t)
\end{equation}
whenever $t\geq s\geq0$.
\begin{Claim}\label{lem: invertibility}
For $t\ge s\geq0$, $T(t,s)\rvert_{\mathcal U(s)} \colon \mathcal U(s)\to \mathcal U(t)$ is invertible.
\end{Claim}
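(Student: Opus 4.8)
The plan is to split the assertion into surjectivity and injectivity and then use that the spaces involved are finite-dimensional. Surjectivity of $T(t,s)\rvert_{\mathcal U(s)}\colon\mathcal U(s)\to\mathcal U(t)$ has already been recorded in~\eqref{eq: invariance S U}, where it is observed that $T(t,s)\mathcal U(s)=\mathcal U(t)$. Moreover, $\mathcal U(s)=T(s,0)\mathcal U$ and $\mathcal U(t)=T(t,0)\mathcal U$ are images of the finite-dimensional space $\mathcal U$ under bounded linear operators, hence they are finite-dimensional, and in particular closed, subspaces of $C$. Thus the only real content of the claim is that $T(t,s)$ is \emph{injective} on $\mathcal U(s)$: once this is shown, $T(t,s)\rvert_{\mathcal U(s)}$ is a linear bijection between finite-dimensional normed spaces and is therefore automatically a linear homeomorphism (equivalently, one could invoke the open mapping theorem, since $\mathcal U(s)$ and $\mathcal U(t)$ are Banach spaces and $T(t,s)$ is bounded).

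To prove injectivity I would argue directly from the splitting~\eqref{SPLIT}. Suppose $\phi\in\mathcal U(s)$ satisfies $T(t,s)\phi=0$. By the definition of $\mathcal U(s)$ there is $\psi\in\mathcal U$ with $\phi=T(s,0)\psi$; let $x$ be the unique solution of~\eqref{LDE} on $[0,\infty)$ with $x_0=\psi$. Then $x_t=T(t,s)T(s,0)\psi=T(t,s)\phi=0$, and by uniqueness of solutions of~\eqref{LDE} (continued forward from time $t$) it follows that $x_\tau=0$ for all $\tau\ge t$, i.e. $x(\tau)=0$ for all $\tau\ge t-r$. Consequently $x$ is bounded on $[-r,\infty)$, since it is continuous on the compact interval $[-r,t]$ and vanishes afterwards, so $\sup_{\tau\ge 0}\|T(\tau,0)\psi\|=\sup_{\tau\ge 0}\|x_\tau\|<\infty$; that is, $\psi\in\mathcal S(0)$. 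As $\psi\in\mathcal U$ as well and $\mathcal S(0)\cap\mathcal U=\{0\}$ by~\eqref{SPLIT}, we get $\psi=0$, whence $\phi=T(s,0)\psi=0$. This gives the injectivity of $T(t,s)\rvert_{\mathcal U(s)}$ and, combined with the first paragraph, the invertibility asserted in the claim.

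I do not expect a genuine obstacle here: the argument is a routine combination of forward uniqueness for~\eqref{LDE} with the direct sum decomposition~\eqref{SPLIT}. The single point that deserves a line of care is the verification that a solution which reaches $0$ is globally bounded, hence lies in $\mathcal S(0)$; this uses that $\tau\mapsto x_\tau$ is continuous, so $x$ stays bounded on $[0,t]$. As a byproduct, injectivity of $T(t,0)$ on $\mathcal U$ yields $\dim\mathcal U(s)=\dim\mathcal U$ for every $s\ge 0$, which is convenient when later comparing $\mathcal U(s)$ with $\mathcal U(t)$.
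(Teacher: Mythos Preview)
Your proof is correct and follows essentially the same route as the paper: reduce to injectivity via~\eqref{eq: invariance S U}, pull $\phi$ back to $\psi\in\mathcal U$, observe that $x_t=0$ forces the solution to be bounded, and conclude $\psi=0$. The only cosmetic difference is that the paper phrases the last step as ``uniqueness in Claim~\ref{LEM2} with $z\equiv 0$'' rather than invoking $\mathcal S(0)\cap\mathcal U=\{0\}$ directly, but these are the same observation.
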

\begin{proof}[Proof of Claim~\ref{lem: invertibility}]
In view of~\eqref{eq: invariance S U}, we only need to show that the operator $T(t,s)\rvert_{\mathcal U(s)} \colon \mathcal U(s)\to \mathcal U(t)$ is injective. Let $t\geq s\geq0$ and $\phi \in \mathcal U(s)$ be such that $T(t,s)\phi=0$. Since $\phi \in \mathcal U(s)$, there exists $\bar{\phi}\in\mathcal U$ such that $\phi=T(s,0)\bar{\phi}$. Let $x\colon [-r, \infty)\to \R^d$ be the solution of~\eqref{LDE} such that $x_0=\bar{\phi}\in\mathcal U$. Since 
\[
x_t=T(t,0)\bar\phi=T(t,s)T(s,0)\bar\phi=T(t,s)\phi=0,
\]
we have that $x(\tau)=0$ for $\tau\geq t-r$, which implies that  $\sup_{\tau\ge -r}|x(\tau)|<\infty$. It follows from the uniqueness in Claim~\ref{LEM2}, applied for $z\equiv 0$, that $x\equiv 0$. This implies that $\bar{\phi}=\phi= 0$.
\end{proof}
\begin{Claim}\label{splitting}
For each $t\ge 0$, $C$ can be decomposed into the direct sum
\begin{equation}\label{SPLIT2} 
C=\mathcal S(t)\oplus \mathcal U(t).
\end{equation}
\end{Claim}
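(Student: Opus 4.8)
The plan is to obtain the decomposition \eqref{SPLIT2} at an arbitrary time $t\ge 0$ by transporting the decomposition \eqref{SPLIT} at time $0$ forward along the solution operator $T(t,0)$, combining this with the covariance of the stable subspaces (Claim~\ref{INV}), the algebraic sum decomposition $C=T(t,0)C+\mathcal S(t)$ furnished by Claim~\ref{ee}, and the invariance relations in \eqref{eq: invariance S U}. For $t=0$ the assertion is exactly \eqref{SPLIT}, so one may assume $t>0$.

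First I would check that the sum $\mathcal S(t)+\mathcal U(t)$ is direct, i.e.\ $\mathcal S(t)\cap\mathcal U(t)=\{0\}$. Let $\phi$ belong to this intersection. Since $\phi\in\mathcal U(t)=T(t,0)\mathcal U$, write $\phi=T(t,0)\bar\phi$ with $\bar\phi\in\mathcal U$. Because $\phi\in\mathcal S(t)$, the element $\bar\phi$ lies in the preimage $[\,T(t,0)\,]^{-1}(\mathcal S(t))$, which equals $\mathcal S(0)$ by Claim~\ref{INV} (applied with $s=0$). Hence $\bar\phi\in\mathcal S(0)\cap\mathcal U=\{0\}$ by \eqref{SPLIT}, so $\bar\phi=0$ and therefore $\phi=T(t,0)\bar\phi=0$.

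Next I would verify that $C=\mathcal S(t)+\mathcal U(t)$. Given $\phi\in C$, Claim~\ref{ee} with $s=0$ yields a representation $\phi=T(t,0)\psi+\sigma$ with $\psi\in C$ and $\sigma\in\mathcal S(t)$. Decomposing $\psi=\psi_{\mathcal S}+\psi_{\mathcal U}$ according to \eqref{SPLIT}, with $\psi_{\mathcal S}\in\mathcal S(0)$ and $\psi_{\mathcal U}\in\mathcal U$, the first relation in \eqref{eq: invariance S U} gives $T(t,0)\psi_{\mathcal S}\in\mathcal S(t)$, while by definition $T(t,0)\psi_{\mathcal U}\in T(t,0)\mathcal U=\mathcal U(t)$. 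Consequently $\phi=\bigl(T(t,0)\psi_{\mathcal S}+\sigma\bigr)+T(t,0)\psi_{\mathcal U}\in\mathcal S(t)+\mathcal U(t)$, which together with the previous paragraph establishes \eqref{SPLIT2}.

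I do not expect a genuine obstacle here: the statement is essentially a formal consequence of Claims~\ref{INV}, \ref{ee} and the invariance relations, and the only point demanding a little care is reading $[\,T(t,0)\,]^{-1}$ in Claim~\ref{INV} as the full preimage rather than as the inverse of an operator when checking directness. As a byproduct one also records that $\dim\mathcal U(t)=\dim\mathcal U=\operatorname{codim}\mathcal S(0)<\infty$, since $T(t,0)\rvert_{\mathcal U}$ is injective by Claim~\ref{lem: invertibility}; this will be convenient later, though it is not needed for \eqref{SPLIT2} itself.
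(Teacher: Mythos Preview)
Your argument is correct. Both the directness and the spanning parts are clean consequences of the earlier Claims~\ref{INV} and~\ref{ee} together with the decomposition~\eqref{SPLIT}, exactly as you write.

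Your route differs from the paper's in a pleasant way. For the spanning part, the paper reruns the construction from the proof of Claim~\ref{ee} (building the auxiliary functions $v$ and $z$) and then invokes Claim~\ref{LEM2} to produce a bounded solution~$x$ of the nonhomogeneous equation with $x_0\in\mathcal U$, so that $x_t\in\mathcal U(t)$ directly and $\phi-x_t\in\mathcal S(t)$ by boundedness. For directness, the paper again appeals to the uniqueness statement in Claim~\ref{LEM2} (with $z\equiv 0$). You instead use the \emph{statement} of Claim~\ref{ee} to write $\phi=T(t,0)\psi+\sigma$ and then split $\psi$ via~\eqref{SPLIT}, and you deduce directness from Claim~\ref{INV} alone. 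The upshot is that your proof avoids Claim~\ref{LEM2} entirely and is purely algebraic in the evolution family and the already-established subspace relations; the paper's version, on the other hand, keeps the role of Claim~\ref{LEM2} visible early, which fits its later repeated use in Claims~\ref{C}--\ref{exp est unstable}. Both are valid; yours is the shorter path to~\eqref{SPLIT2}.
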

\begin{proof}[Proof of Claim~\ref{splitting}]
Since $\mathcal U(0)=\mathcal U$, if $t=0$, then the decomposition~\eqref{SPLIT2} follows immediately from~\eqref{SPLIT}. Now suppose that
$t>0$ and let $\phi \in C$ be arbitrary. Let $v\colon [t-r, \infty)\to \R^d$ and $z\colon [-r, \infty)\to \R^d$ be as in the proof of Claim~\ref{ee}. Since $z$ is continuous and $\sup_{s\ge -r} |z(s)|<\infty$, by Claim~\ref{LEM2} there exists a  unique continuous function $x\colon [-r, \infty)\to \R^d$ which is differentiable on $[0, \infty)$ such that $x_0\in \mathcal U$, $\sup_{t\ge -r}|x(t)|<\infty$ and~\eqref{adm} holds. By the same reasoning as in the proof of Claim~\ref{ee}, we have that $x_t-\phi\in \mathcal S(t)$. Moreover, $x_t=T(t,0)x_0\in \mathcal U(t)$. Consequently,
\[
\phi=(\phi-x_t)+x_t\in \mathcal S(t)+\mathcal U(t).
\]
Now suppose that $\phi \in \mathcal S(t)\cap \mathcal U(t)$. Then, there exists $\bar{\phi}\in \mathcal U$ such that $\phi=T(t,0)\bar{\phi}$. Consider the unique solution $x\colon [-r, \infty)\to \R^d$ of Eq.~\eqref{LDE} with $x_0=\bar{\phi}$. Then, $x$ satisfies~\eqref{adm} with $z\equiv 0$ and $x_0=\bar\phi\in \mathcal U$. Moreover, $x_t=T(t,0)\bar\phi=\phi\in\mathcal S(t)$ implies that $\sup_{t\ge -r}|x(t)|<\infty$. By the uniqueness in Claim~\ref{LEM2}, we  conclude that $x\equiv 0$. Therefore, $\bar\phi=0$ which implies that $\phi=0$.
\end{proof}

\begin{Claim}\label{C}
There exists $Q>0$ such that 
\[
\|T(t,s)\phi \| \le Q\|\phi \|, 
\]
for $t\ge s\ge 0$ and $\phi \in \mathcal S(s)$.
\end{Claim}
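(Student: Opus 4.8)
The plan is to establish the bound directly, and with a constant independent of $s$, by combining the Perron-type estimate of Claim~\ref{LEM2} with a localization (cut-off) trick and the a priori Gr\"onwall bound coming from the uniform boundedness hypothesis~\eqref{bc}.

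First I would fix $s\ge0$ and $\phi\in\mathcal S(s)$ and let $x$ denote the solution of~\eqref{LDE} with $x_s=\phi$; by the very definition of $\mathcal S(s)$ this $x$ is bounded on $[s-r,\infty)$, and since $\|T(t,s)\phi\|=\|x_t\|\le\sup_{u\ge s-r}|x(u)|$ for every $t\ge s$, the whole task reduces to bounding $\sup_{u\ge s-r}|x(u)|$ by $Q\|\phi\|$ for an $s$-independent constant $Q$. To that end I would fix once and for all a $C^1$ function $\chi\colon\R\to[0,1]$ with $\chi\equiv0$ on $(-\infty,0]$ and $\chi\equiv1$ on $[1,\infty)$, and set $y(\tau)=\chi(\tau-s)x(\tau)$ for $\tau\ge s-r$ and $y(\tau)=0$ for $\tau\in[-r,s-r]$. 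A routine check shows that $y$ is continuous on $[-r,\infty)$, differentiable on $[0,\infty)$ (the only seam being $\tau=s$, where $\chi(0)=\chi'(0)=0$), bounded, and identically zero on $[-r,s]$, and that, putting $z(\tau):=y'(\tau)-L(\tau)y_\tau$ for $\tau\ge0$ (so that $y$ solves~\eqref{adm} with this $z$), the function $z$ is continuous, vanishes on $[0,s]$ and on $[s+1+r,\infty)$, and on the compact interval $[s,s+1+r]$ satisfies $|z(\tau)|\le(\|\chi'\|_\infty+M)\sup_{u\in[s-r,s+1+r]}|x(u)|$, where~\eqref{bc} enters through the term $\chi(\tau-s)L(\tau)x_\tau-L(\tau)y_\tau$.

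The decisive point --- and the place where the uniform boundedness of the coefficients is essential --- is that on the \emph{fixed-length} interval $[s-r,s+1+r]$ the solution $x$ is controlled by $\|\phi\|$ with a constant independent of $s$: from $|x'(u)|\le M\|x_u\|$ and $|x(u)|\le\|\phi\|$ on $[s-r,s]$ a Gr\"onwall argument gives $\sup_{u\in[s-r,s+1+r]}|x(u)|\le e^{M(1+r)}\|\phi\|$, hence $\|z\|_\infty\le C\|\phi\|$ with $C:=(\|\chi'\|_\infty+M)e^{M(1+r)}$ depending only on $M$, $r$ and $\chi$. Since $y$ is then a bounded solution of~\eqref{adm} which is differentiable on $[0,\infty)$ and has $y_0=0\in\mathcal U$, the uniqueness assertion of Claim~\ref{LEM2} forces $y$ to be the unique bounded solution of~\eqref{adm} with initial segment in $\mathcal U$ provided by that claim, whence $\sup_{u\ge-r}|y(u)|\le A\|z\|_\infty\le AC\|\phi\|$.

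Finally, since $\chi(\cdot-s)\equiv1$ on $[s+1,\infty)$ we have $y=x$ on $[s+1+r,\infty)$, so $\sup_{u\ge s+1+r}|x(u)|\le AC\|\phi\|$; together with $\sup_{u\in[s-r,s+1+r]}|x(u)|\le e^{M(1+r)}\|\phi\|$ this yields $\sup_{u\ge s-r}|x(u)|\le Q\|\phi\|$ with $Q:=\max\{e^{M(1+r)},AC\}$, independent of $s$ and $\phi$, which is exactly the claim. The main obstacle I anticipate is arranging the cut-off so that $z$ is supported in an interval of fixed length $1+r$ near $s$: it is precisely this $s$-independent compact support, combined with~\eqref{bc}, that breaks the apparent circularity of the estimate, since a careless cut-off would only bound $\|z\|_\infty$ in terms of $\sup_{u\ge s-r}|x(u)|$, i.e.\ in terms of the very quantity to be controlled.
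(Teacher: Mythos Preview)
Your proof is correct and follows essentially the same approach as the paper: multiply the solution by a smooth cut-off that rises from $0$ to $1$ on $[s,s+1]$, observe that the resulting function is a bounded solution of~\eqref{adm} with initial segment $0\in\mathcal U$, and invoke Claim~\ref{LEM2} to bound it by a constant times $\|z\|_\infty$, which in turn is controlled on the fixed-length interval $[s,s+r+1]$ using~\eqref{bc}. The only cosmetic difference is that the paper quotes the a~priori bound $\|T(t,s)\|\le Ke^{a(t-s)}$ from the literature, whereas you derive the equivalent estimate directly via Gr\"onwall.
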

\begin{proof}[Proof of Claim~\ref{C}]
It is known (see, e.g., \cite[Sec.~6.1, Corollary~1.1, p.~143]{Hale}) that under condition~
~\eqref{bc} the evolution family $(T(t,s))_{t\geq s\geq0}$ is exponentially bounded, i.e. there exist $K, a>0$ such that 
\begin{equation}\label{bg}
\|T(t,s)\| \le Ke^{a(t-s)}, \quad t\ge s\ge 0.
\end{equation}
Fix $s\ge 0$, $\phi \in \mathcal S(s)$ and let $u\colon [s-r, \infty)\to \R^d$ be the solution of Eq.~\eqref{LDE} on $[s, \infty)$ such that $u_s=\phi$. Choose a continuously differentiable function
$\psi\colon[-r,\infty) \to[0,1]$ such that $\operatorname{supp}\psi\subset[s,\infty)$,
$\psi \equiv 1$ on $[s+1, \infty)$ and $|\psi'| \le 2$. 
Define $x\colon [-r, \infty)\to \R^d$ and $z\colon [0, \infty)\to \R^d$ by 
\[
x(t)=\psi(t) u(t),\qquad t\geq-r,
\]
and 
\[
z(t)=\psi'(t)u(t)+\psi(t)L(t)u_t-L(t)(\psi_t u_t),\qquad t\geq0.
\]
Clearly, $x$ and $z$ are continuous, $x$ is differentiable on $[0, \infty)$ and~\eqref{adm} holds.  Since $\phi \in \mathcal S(s)$, the solution~$u$ and hence~$x$ is bounded on $[-r,\infty)$. Moreover, $x_0=0\in \mathcal U$.
Note that $\psi\equiv0$ on $[-r,s]$ and hence $z\equiv0$ on $[0,s]$.
Furthermore, 
$ \psi\equiv1$ on $[s+1,\infty)$, which implies that $\psi'\equiv0$ on $[s+1,\infty)$, $\psi_t\equiv 1$ for $t\ge s+r+1$ and hence $z(t)=0$ for $t\geq s+r+1$. From this, using~\eqref{bc} and~\eqref{bg}, we find that 
\[
\begin{split}
\sup_{t\ge 0}|z(t)|&=\sup_{t\in[s,s+r+1]}|z(t)|\\
&\le 2\sup_{t\in [s, s+1]}|u(t)|+\sup_{t\in [s, s+r+1]}(\psi(t)|L(t)u_t|+|L(t)(\psi_t u_t)|) \\
&\le 2\sup_{t\in [s, s+1]}\|u_t\|+2M\sup_{t\in [s, s+r+1]}\| u_t\|\\
&= 2\sup_{t\in [s, s+1]}\|T(t,s)\phi\|+2M\sup_{t\in [s, s+r+1]}\|T(t,s)\phi\|\\
&\le 2Ke^a\| \phi\|+2MKe^{a(r+1)}\| \phi\|.
\end{split}
\]
From~\eqref{cgt}, taking into account that $\psi\equiv1$ on $[s+1,\infty)$, we conclude that 
\[
\sup_{t\ge s+1}|u(t)| \le \sup_{t\ge -r}|x(t)|  \le A\sup_{t\ge 0}|z(t)| \le 2A(Ke^a+MKe^{a(r+1)})\| \phi\|.
\]
Hence, 
\[
\|T(t,s)\phi \|=\|u_t\| \le 2A(Ke^a+MKe^{a(r+1)})\| \phi\|, \quad t\ge s+r+1.
\]
On the other hand, \eqref{bg} implies that 
\[
\|T(t,s)\phi \| \le Ke^{a(r+1)}\|\phi\|, \qquad t\in [s, s+r+1].
\]
Consequently, the conclusion of the claim holds with
\[
Q:=\max \{Ke^{a(r+1)}, 2A(Ke^a+MKe^{a(r+1)})\}>0.
\]
\end{proof}

\begin{Claim}\label{exp est stable}
There exist $D, \lambda >0$ such that 
\[
\|T(t,s)\phi \| \le De^{-\lambda (t-s)}\|\phi\|,
\]
for $t\ge s\ge 0$ and $\phi \in \mathcal S(s)$.
\end{Claim}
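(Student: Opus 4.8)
The plan is to promote the uniform bound of Claim~\ref{C} to an exponential one by the standard two‑step scheme: first produce an \emph{eventual contraction} --- a single time $N>0$ and a factor $q\in(0,1)$, both independent of $s$, with $\|T(s+N,s)\phi\|\le q\|\phi\|$ for all $s\ge0$ and all $\phi\in\mathcal S(s)$ --- and then iterate this estimate along the progression $s,s+N,s+2N,\dots$, using the invariance $T(t,s)\mathcal S(s)\subseteq\mathcal S(t)$ from~\eqref{eq: invariance S U}, the cocycle property and Claim~\ref{C} to fill in the intermediate times and conclude.

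For the eventual contraction I would run a ``linearly growing cut‑off'' variant of the argument in the proof of Claim~\ref{C}. Fix $s\ge0$ and $\phi\in\mathcal S(s)$, let $u$ be the solution of~\eqref{LDE} with $u_s=\phi$, so that $\sup_{\tau\ge s-r}|u(\tau)|\le Q\|\phi\|$ by Claim~\ref{C} (note $Q\ge1$) together with $u_s=\phi$. For a parameter $T\ge1$ choose $\psi=\psi_T\colon[-r,\infty)\to[0,\infty)$ continuously differentiable that vanishes on $(-\infty,s]$, equals $\tau-s$ on $[s+1,s+T]$, is constant (hence $\ge T$) on $[s+T+1,\infty)$, and satisfies $|\psi'|\le2$ throughout; such a $\psi_T$ plainly exists. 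Put $x:=\psi u$ (with $x\equiv0$ on $[-r,s]$) and
\[
z(\tau):=\psi'(\tau)u(\tau)+\psi(\tau)L(\tau)u_\tau-L(\tau)(\psi_\tau u_\tau)\ \text{ for }\tau\ge s,\qquad z\equiv0\ \text{ on }[0,s].
\]
Exactly as in the proof of Claim~\ref{C}, $x$ is bounded and continuous, differentiable on $[0,\infty)$, satisfies $x_0=0\in\mathcal U$ and solves~\eqref{adm} with this $z$, while $z$ is continuous and bounded (vanishing outside a compact set). The crucial point is that, writing $\psi(\tau)L(\tau)u_\tau-L(\tau)(\psi_\tau u_\tau)=L(\tau)\bigl(\psi(\tau)u_\tau-\psi_\tau u_\tau\bigr)$ and using $|\psi(\tau)-\psi(\tau+\theta)|\le2|\theta|\le2r$ and $\|L(\tau)\|\le M$, one obtains $\sup_{\tau\ge0}|z(\tau)|\le 2(1+Mr)Q\|\phi\|=:c_1\|\phi\|$ with $c_1$ \emph{independent of $s$, $\phi$ and $T$}. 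By the uniqueness in Claim~\ref{LEM2}, $x$ is \emph{the} unique bounded solution of~\eqref{adm} with $x_0\in\mathcal U$, so~\eqref{cgt} yields $\sup_{\tau\ge-r}|x(\tau)|\le Ac_1\|\phi\|$. Since $\psi(\tau)\ge T$ for $\tau\ge s+T$, this forces $|u(\tau)|\le (Ac_1/T)\|\phi\|$ for $\tau\ge s+T$, hence $\|T(\tau,s)\phi\|=\|u_\tau\|\le (Ac_1/T)\|\phi\|$ for $\tau\ge s+T+r$. Taking $T:=\max\{1,2Ac_1\}$ and $N:=T+r$ gives $\|T(s+N,s)\phi\|\le\tfrac12\|\phi\|$ for all $s\ge0$ and $\phi\in\mathcal S(s)$.

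Finally, iterating this bound (each factor lands back in $\mathcal S(\,\cdot\,)$ by~\eqref{eq: invariance S U}, so the contraction may be reapplied at the base points $s,s+N,s+2N,\dots$) gives $\|T(s+kN,s)\phi\|\le2^{-k}\|\phi\|$ for all $k\in\N$. Writing $t-s=kN+\rho$ with $k=\lfloor(t-s)/N\rfloor$ and $0\le\rho<N$, the identity $T(t,s)\phi=T(t,s+kN)T(s+kN,s)\phi$ and Claim~\ref{C} give $\|T(t,s)\phi\|\le Q\,2^{-k}\|\phi\|\le 2Q\,e^{-\lambda(t-s)}\|\phi\|$ with $\lambda:=(\ln2)/N>0$, which is the assertion with $D:=2Q$. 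The only delicate issue is the uniformity of $c_1$: it is essential that $\psi_T$ be chosen with $|\psi_T'|$ bounded by an absolute constant, so that both $|\psi_T'(\tau)u(\tau)|$ and $|\psi_T(\tau)-\psi_T(\tau+\theta)|\,|u(\tau+\theta)|$ are controlled by $\sup_{\tau\ge s-r}|u(\tau)|\le Q\|\phi\|$ coming from Claim~\ref{C}; this is what makes $c_1$, and hence the contraction time $N$, independent of $s$, $\phi$ and $T$. Everything else is a routine combination of Claims~\ref{C}, \ref{INV} and~\ref{LEM2} with the cocycle property.
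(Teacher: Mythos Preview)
Your proof is correct, and it reaches the same conclusion as the paper's by the same overall scheme (produce a single-step uniform contraction, then iterate using the invariance~\eqref{eq: invariance S U} and Claim~\ref{C}), but the construction of the contraction step is genuinely different.

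The paper builds the auxiliary solution as $x=\chi u$ with the \emph{solution-adapted} multiplier
\[
\chi(t)=\int_s^{t}\psi(\tau)\,\|u_\tau\|^{-1}\,d\tau,
\]
where $\psi$ is a bounded bump, and then argues by contradiction: if $\|u_{t_0}\|>\tfrac1e\|\phi\|$ for some $t_0\ge s+N$, the lower bound $\|u_\tau\|^{-1}\ge (Q\|\phi\|)^{-1}$ (from Claim~\ref{C}) makes $\|x_{t_0}\|$ at least of order $(t_0-s)/(eQ)$, which collides with the upper bound $A(MQr+1)$ coming from Claim~\ref{LEM2}. This $\|u_\tau\|^{-1}$ weight is the classical admissibility device (Dalecki\u{\i}--Kre\u{\i}n, Massera--Sch\"affer); it is in a sense self-normalizing and would also be the natural tool in situations where a uniform bound such as Claim~\ref{C} is not available a priori.

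You instead take a \emph{solution-independent} linearly growing $\psi_T$ with $|\psi_T'|\le2$ and argue directly: since $z$ involves $\psi_T$ only through $\psi_T'$ and the increments $\psi_T(\tau)-\psi_T(\tau+\theta)$, both bounded independently of $T$, the forcing satisfies $\|z\|_\infty\le c_1\|\phi\|$ uniformly in $T$; Claim~\ref{LEM2} then gives $\|\psi_T u\|_\infty\le Ac_1\|\phi\|$, which forces $|u|\le (Ac_1/T)\|\phi\|$ wherever $\psi_T\ge T$. This is a clean shortcut: having already proved Claim~\ref{C}, the $\|u_\tau\|^{-1}$ weight is unnecessary, and your direct argument avoids both the contradiction step and the slightly more delicate estimate of $\|\chi(t)u_t-u_t\chi_t\|$ in the paper. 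The final iteration and interpolation are essentially identical in both proofs.
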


\begin{proof}[Proof of Claim~\ref{exp est stable}]
We claim that if
\begin{equation}\label{Nchoice}
N>eAQ(MQr+1)+r
\end{equation}
with~$A$ and~$Q$ as in Claims~\ref{LEM2} and~\ref{C}, respectively,
then for every $s\geq0$ and $\phi\in\mathcal S(s)$,
\begin{equation}\label{e}
\|T(t,s)\phi \| \le \frac 1 e \|\phi \|\qquad\text{for $t\geq s+N$}.
\end{equation}
Suppose, for the sake of contradiction, that \eqref{Nchoice} holds and there exist $s\geq0$ and $\phi\in\mathcal S(s)$ such that
\begin{equation}\label{low}
\|T(t_0,s)\phi \|>\frac 1 e \|\phi \|\qquad\text{for some $t_0\geq s+N$}.
\end{equation}
Let $u$ denote the solution of the homogeneous equation Eq.~\eqref{LDE} with initial value $u_s=\phi$ so that $u_{t_0}=T(t_0,s)\phi$. 
From~\eqref{low} and Claim~\ref{C}, we obtain 
\begin{equation}\label{lowup}
\frac 1 e \|\phi \|<\|u_{t_0}\|=\|T(t_0,\tau)u_\tau\|\leq Q\|u_\tau\|\qquad \text{for $\tau\in[s,t_0]$}.
\end{equation}
Since $t_0\geq s+N>s+r$, we can choose a continuous function $\psi\colon[s,t_0]\rightarrow[0,1]$ such that
\[
\operatorname{supp}\psi\subset[s,t_0-r]\qquad\text{and}\qquad\text{$\psi\equiv1$ on $[s+\epsilon,t_0-r-\epsilon]$},
\]
where $\epsilon\in(0,\frac{1}{2}(t_0-r-s))$.
In view of~\eqref{lowup}, we can define a function $x\colon[-r,\infty)\rightarrow\mathbb R^d$ by 
\[
x(t)=\begin{cases}
\chi(t)u(t)& \text{for $t\ge s$,}\\
0 & \text{for $t\in[-r,s)$,}
\end{cases}
\]
where $\chi\colon[-r,\infty)\rightarrow[0,\infty)$ is given by
\[
\chi(t)=\begin{cases}
0\qquad&\text{for $t\in[-r,s)$},\\
\displaystyle\int_s^t\psi(\tau)\|u_\tau\|^{-1}\,d\tau
\qquad&\text{for $t\in[s,t_0)$}, \\[10pt]
\displaystyle\int_s^{t_0}\psi(\tau)\|u_\tau\|^{-1}\,d\tau
\qquad&\text{for $t\in[t_0, \infty)$}.
\end{cases}
\]
Evidently, $x$ coincides with the trivial solution of~\eqref{LDE} on $[-r,s)$. Since $\psi\equiv0$ on $[t_0-r,\infty)$, we have that $x(t)=u(t)\chi(t_0-r)$ for $t\geq t_0-r$. Thus, on the interval $[t_0-r,\infty)$, the function~$x$ coincides with a constant multiple of the solution~$u$ of the homogeneous equation~\eqref{LDE}. This implies that~$x$ is a solution of~\eqref{LDE} on $[t_0,\infty)$. Clearly, $x(t)=\chi(t)u(t)=0$ for $t\in[s-r,s]$. Therefore, $x(t)=\chi(t)u(t)$ for $t\geq s-r$ and hence $x_t= u_t\chi_t$ for $t\geq s$. It follows by easy calculations that if $t\in[s,t_0)$, then 
\[
x'(t)=L(t)x_t+L(t)(\chi(t)u_t-u_t\chi_t)+\psi(t)u(t)\|u_t\|^{-1}.
\]
Therefore, $x$ is a solution of the nonhomogeneous equation
\begin{equation}\label{nonhomog}
x'(t)=L(t)x_t+z(t),\qquad t\geq0,
\end{equation}
where $z\colon[0,\infty)\rightarrow\mathbb R^d$ is a continuous function defined by
\[
z(t)=\begin{cases}
0\qquad&\text{for $t\in[0,s)$},\\
L(t)(\chi(t)u_t-u_t\chi_t)+\psi(t)u(t)\|u_t\|^{-1}
\qquad&\text{for $t\in[s,t_0)$},\\
0 \qquad&\text{for $t\in[t_0,\infty)$}.
\end{cases}
\]
Let $t\in[s,t_0)$ and $\theta\in[-r,0]$. Then
\[
(\chi(t)u_t-u_t\chi_t)(\theta)=u(t+\theta)\int_{t+\theta}^{t}\psi(\tau)\|u_\tau\|^{-1}\,d\tau
\qquad\text{whenever $t+\theta\geq s$},
\]
and
\[
(\chi(t)u_t-u_t\chi_t)(\theta)=u(t+\theta)\int_{s}^{t}\psi(\tau)\|u_\tau\|^{-1}\,d\tau
\qquad\text{whenever $t+\theta<s$}.
\]
In both cases, 
Claim~\ref{C} implies for $\tau\in[s,t]$,
\[
|u(t+\theta)|\leq\|u_{t}\|=\|T(t,\tau)u_\tau\|\leq Q\|u_\tau\|
\]
so that $|u(t+\theta)|\|u_\tau\|^{-1}\leq Q$. (We have used that $u_\tau=T(\tau,s)u_s\in\mathcal S(\tau)$ for $\tau\geq s$, a consequence of~\eqref{eq: invariance S U}.)
If $t+\theta\geq s$, then from the last inequality, we find that
\[
|u(t+\theta)|\int_{t+\theta}^{t}\psi(\tau)\|u_\tau\|^{-1}\,d\tau\leq Q\int_{t+\theta}^{t}\psi(\tau)\,d\tau\leq Qr.
\]
If  $t+\theta<s$, then $t<s-\theta\leq s+r$, which, together with the same inequality as before, implies that
\[
|u(t+\theta)|\int_{s}^{t}\psi(\tau)\|u_\tau\|^{-1}\,d\tau\leq Q\int_{s}^{s+r}\psi(\tau)\,d\tau\leq Qr.
\]
From the above estimates, we conclude that
\[
\|\chi(t)u_t-u_t\chi_t\|\leq Qr\qquad\text{for $t\in[s,t_0]$}.
\]
The last inequality and~\eqref{bc} imply that
\begin{equation}\label{zbound}
|z(t)|\leq MQr+1\qquad\text{for $t\geq0$.}
\end{equation}
Since $\chi$ is bounded on $[-r,\infty)$ and $u_s=\phi\in\mathcal S(s)$ implies that $u$ is also bounded on $[s-r,\infty)$, we have that $x$ is a bounded solution of~\eqref{nonhomog}. In addition, we have that $x_0=0\in\mathcal U$.
From this and~\eqref{zbound}, by the application of Claim~\ref{LEM2}, we conclude that
\[
|x(t)|\leq A(MQr+1)\qquad\text{for $t\geq-r$.}
\]
From the last inequality, taking into account that $\psi \equiv 1$  on $[s+\epsilon,t_0-r-\epsilon]$ and $\|u_\tau\|=\|T(\tau,s)u_s\|\leq Q\|u_s\|=Q\|\phi\|$ for $\tau\geq s$ (cf.~Claim~\ref{C}), we obtain 
\[
\begin{split}
A(MQr+1)\geq\|x_{t_0}\|&=
\sup_{\theta \in [-r, 0]}|u(t_0+\theta)|\int_{s}^{t_0+\theta}\psi (\tau)\|u_\tau\|^{-1}\, d\tau\\
&\ge \|u_{t_0}\|\int_{s+\epsilon}^{t_0-r-\epsilon}\|u_\tau\|^{-1}\, d\tau 
 \ge \|u_{t_0}\| \frac{t_0-r-s-2\epsilon}{Q\|\phi \|}\\
&\ge \frac{1}{eQ}(t_0-r-s-2\epsilon),
\end{split}
\]
the last inequality being a consequence of~\eqref{lowup}.
Letting $\epsilon \to 0$ in the last inequality, we conclude that 
\[
A(MQr+1)\geq\frac{1}{eQ}(t_0-r-s).
\]
This, together with the choice of~$t_0$ in~\eqref{low}, implies that
\[
N\leq t_0-s\leq eAQ(MQr+1)+r
\]
contradicting~\eqref{Nchoice}. Thus, \eqref{e} holds whenever $s\geq0$ and $\phi\in\mathcal S(s)$.

If $t\ge s\geq0$, then $t-s=kN+h$ for some $k\in \N_0$ and $h\in[0,N)$. From~\eqref{e} and Claim~\ref{C}, we obtain for $\phi \in \mathcal S(s)$, 
\[
\begin{split}
\|T(t,s)\phi\| &=\|T(s+kN+h, s)\phi \|
\le Q\|T(s+kN, s)\phi \|\\
&\le Qe^{-k}\|\phi \|\le eQe^{-\frac{t-s}{N}}\|\phi \|.
\end{split}
\]
Hence, the conclusion of the claim holds with $D=eQ$ and $\lambda=1/N$. 
\end{proof}

\begin{Claim}\label{C-U}
There exists $Q'>0$ such that 
\[
\|T(t,s)\phi \| \le Q'\|\phi \|, 
\]
for $0\leq t\leq s$ and $\phi \in \mathcal U(s)$.
\end{Claim}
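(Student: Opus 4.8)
The plan is to run the unstable counterpart of the cut-off argument used in Claim~\ref{C}. Fix $0\le t\le s$ and $\phi\in\mathcal U(s)$. By definition of $\mathcal U(s)$ there is $\bar\phi\in\mathcal U$ with $\phi=T(s,0)\bar\phi$, so the solution $u\colon[-r,\infty)\to\R^d$ of~\eqref{LDE} with $u_0=\bar\phi$ satisfies $u_\tau=T(\tau,0)\bar\phi\in\mathcal U(\tau)$ for all $\tau\ge0$, $u_s=\phi$, and, by Claim~\ref{lem: invertibility}, $T(t,s)\phi=u_t$. The structural point that distinguishes this from Claim~\ref{C} is that an unstable solution is automatically defined on all of $[0,s]$ (indeed on $[-r,\infty)$) and has $u_0\in\mathcal U$, which is precisely the hypothesis needed to feed it into the Perron type estimate of Claim~\ref{LEM2}.

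I would then fix a $C^1$ cut-off $\chi\colon[-r,\infty)\to[0,1]$ with $\chi\equiv1$ on $[-r,s]$, $\operatorname{supp}\chi\subset[-r,s+1]$ and $|\chi'|\le2$, put $x(\tau)=\chi(\tau)u(\tau)$ for $\tau\ge-r$ and
\[
z(\tau)=\chi'(\tau)u(\tau)+\chi(\tau)L(\tau)u_\tau-L(\tau)(\chi_\tau u_\tau),\qquad\tau\ge0,
\]
as in the proofs of Claims~\ref{ee} and~\ref{C}; a direct computation gives $x'(\tau)=L(\tau)x_\tau+z(\tau)$ for $\tau\ge0$. Since $\chi\equiv1$ on $[-r,s]$ one checks that $\chi_\tau$ is the constant function $1$ for $\tau\in[0,s]$, whence $z\equiv0$ on $[0,s]$, while $z$ vanishes for large $\tau$ because $\chi$ has compact support; thus $z$ is continuous and supported in $[s,s+r+1]$. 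On that interval $u_\tau=T(\tau,s)\phi$, so $\|u_\tau\|\le Ke^{a(r+1)}\|\phi\|$ by the exponential boundedness~\eqref{bg} (valid under~\eqref{bc}), and a crude bound using $\|\chi_\tau u_\tau\|\le\|u_\tau\|$ and $\|L(\tau)\|\le M$ yields $\sup_{\tau\ge0}|z(\tau)|\le 2(1+M)Ke^{a(r+1)}\|\phi\|$. Moreover $x$ is bounded (compact support, $u$ continuous), differentiable on $[0,\infty)$, and $x_0=u_0=\bar\phi\in\mathcal U$.

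It then remains to invoke Claim~\ref{LEM2}, which gives $\sup_{\tau\ge-r}|x(\tau)|\le A\sup_{\tau\ge0}|z(\tau)|\le 2A(1+M)Ke^{a(r+1)}\|\phi\|$; since $\chi\equiv1$ on $[-r,s]$ we have $x=u$ on $[-r,s]$, so this bounds $\|u_\tau\|$ for every $\tau\in[0,s]$, and in particular $\|T(t,s)\phi\|=\|u_t\|\le 2A(1+M)Ke^{a(r+1)}\|\phi\|$, so the claim holds with $Q':=2A(1+M)Ke^{a(r+1)}$, a constant independent of $s$, $t$ and $\phi$. I expect the only delicate points to be the verification that $z$ is continuous and identically zero on $[0,s]$ (so that $x$ really coincides with $u$ there) together with the check that $x_0\in\mathcal U$ — all of which rely on taking $\chi\equiv1$ on the whole backward interval $[-r,s]$ — while everything else parallels Claim~\ref{C}.
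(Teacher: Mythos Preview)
Your proof is correct and follows essentially the same approach as the paper: the same cut-off (the paper calls it $\psi$, you call it $\chi$) equal to $1$ on $[-r,s]$ and supported in $[-r,s+1]$, the same verification that the resulting nonhomogeneous term is supported in $[s,s+r+1]$, the same use of~\eqref{bg} to bound $\|u_\tau\|$ there in terms of $\|\phi\|$, and the same appeal to Claim~\ref{LEM2} with $x_0=u_0\in\mathcal U$. The only difference is that the paper keeps the slightly sharper constant $2A(Ke^a+MKe^{a(r+1)})$ by noting that $\chi'$ is supported in $[s,s+1]$, whereas you use the cruder $2A(1+M)Ke^{a(r+1)}$; this is immaterial.
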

\begin{proof}[Proof of Claim~\ref{C-U}]
Given $s\geq 0$ and $\phi \in \mathcal U(s)$, there exists $\bar \phi \in \mathcal{U}$ such that $\phi=T(s,0)\bar \phi$.  Let $u\colon [-r, \infty)\to \R^d$ be the solution of Eq.~\eqref{LDE} on $[0, \infty)$ such that $u_0=\bar \phi$.
Choose a continuously differentiable function
$\psi\colon [-r,\infty)\to [0,1]$ such that $\operatorname{supp}\psi\subset[-r,s+1]$,
$\psi \equiv 1$ on $[-r,s]$ and $|\psi'| \le 2$ on $[-r,\infty)$. Define $x\colon [-r, \infty)\to \R^d$ and $z\colon [0, \infty)\to \R^d$ by 
\[
x(t)=\psi(t) u(t),\qquad t\geq-r,
\]
and 
\[
z(t)=\psi'(t)u(t)+\psi(t)L(t)u_t-L(t)(\psi_t u_t),\qquad t\geq0.
\]
Then, it follows readily that $x$ is continuous on $[-r,\infty)$, $z$ is continuous on $[0, \infty)$, $x$ is differentiable on $[0, \infty)$ and~\eqref{adm} holds.
Moreover, since $x(t)=0$ for $t\geq s+1$, it follows that $\sup_{t\ge -r}|x(t)|<\infty$.
Furthermore, $x_0=u_0\in\mathcal U$. Finally, proceeding as in the proof of Claim~\ref{C}, it can be shown that 
\[
\sup_{t\ge 0}|z(t)|\leq  (2Ke^a+2MKe^{a(r+1)})\| \phi\|.
\]
Since $\psi\equiv1$ on $[-r,s]$, from conclusion~\eqref{cgt} of Claim~\ref{LEM2}, we conclude that 
\[
\sup_{-r\leq t\leq s}|u(t)| \le \sup_{t\ge -r}|x(t)|  \le A\sup_{t\ge 0}|z(t)| \le 2A(Ke^a+MKe^{a(r+1)})\| \phi\|.
\]
This implies that the conclusion of the Claim holds with 
\[
Q':= 2A(Ke^a+2MKe^{a(r+1)})>0.
\]
\end{proof}

\begin{Claim}\label{exp est unstable}
There exist $D', \lambda' >0$ such that 
\[
\|T(t,s)\phi \| \le D'e^{-\lambda' (s-t)}\|\phi\|,
\]
for $0\leq t\leq s$ and $\phi \in \mathcal U(s)$.
\end{Claim}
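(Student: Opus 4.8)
The plan is to reproduce the two-step scheme of Claim~\ref{exp est stable}. Fix $s\ge 0$ and $\phi\in\mathcal U(s)$, and let $u$ be the solution of~\eqref{LDE} with $u_0:=T(0,s)\phi\in\mathcal U$ (which makes sense since $T(0,s)|_{\mathcal U(s)}$ is invertible by Claim~\ref{lem: invertibility}). Then $u_\sigma=T(\sigma,0)u_0\in\mathcal U(\sigma)$ and $T(\sigma,s)u_s=u_\sigma$ for every $\sigma\ge 0$; in particular $u_s=\phi$ and $u_t=T(t,s)\phi$ for $0\le t\le s$, so the task is to estimate $\|u_t\|$ in terms of $\|u_s\|$. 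Claim~\ref{C-U} supplies the uniform two-sided bounds $\|u_{t_0}\|\le Q'\|u_\tau\|$ for $0\le t_0\le\tau$ and $\|u_\tau\|\le Q'\|u_s\|$ for $0\le\tau\le s$, which here play the role that Claim~\ref{C} played for the stable subspace; and, assuming $\phi\ne 0$ (the case $\phi=0$ being trivial), $u_\tau\ne 0$ for all $\tau$ by Claim~\ref{lem: invertibility}, so $\tau\mapsto\|u_\tau\|^{-1}$ is continuous and positive.

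\textbf{Step 1 (one-step contraction).} I would first show that there is a constant $N'>2r$, depending only on $M,r,A,Q'$ (with $A,Q'$ from Claims~\ref{LEM2} and~\ref{C-U}), such that $\|T(t_0,s)\phi\|\le e^{-1}\|\phi\|$ whenever $s-t_0\ge N'$. Suppose this fails, i.e.\ $\|u_{t_0}\|>e^{-1}\|u_s\|$ for some $s$ and some $t_0$ with $s-t_0\ge N'$. Following the idea behind Claim~\ref{exp est stable}, but now producing a \emph{compactly supported} test function, put $a:=(s-t_0)/4$ and $c_0:=(s-t_0)/(8Q'\|u_s\|)$, and choose a $C^1$ cut-off $\chi\colon[-r,\infty)\to[0,\infty)$ supported in $[t_0,s]$, vanishing to first order at $t_0$ and $s$, equal to $c_0$ on the middle half $[t_0+a,s-a]$, and satisfying $|\chi'(t)|\le\|u_t\|^{-1}$ everywhere. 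Such $\chi$ exists because, by $\|u_\tau\|\le Q'\|u_s\|$ on $[t_0,s]$, one has $\int_{t_0}^{t_0+a}\|u_\tau\|^{-1}\,d\tau\ge a/(Q'\|u_s\|)=2c_0$ (and similarly on $[s-a,s]$), which leaves enough room to ramp $\chi$ up to $c_0$ and back to $0$ within the end-quarters. Set $x(t):=\chi(t)u(t)$ for $t\ge-r$ and $z(t):=\chi'(t)u(t)+L(t)(\chi(t)u_t-\chi_tu_t)$ for $t\ge0$; then $x$ is a bounded (compactly supported) solution of~\eqref{adm} with $x_0=0\in\mathcal U$.

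For the estimates: $|\chi'(t)u(t)|\le\|u_t\|^{-1}|u(t)|\le1$; and since $(\chi(t)u_t-\chi_tu_t)(\theta)=u(t+\theta)(\chi(t)-\chi(t+\theta))$, the bound $|\chi(t)-\chi(t+\theta)|\le\int\|u_\sigma\|^{-1}\,d\sigma$ over an interval of length $\le r$, together with $|u(t+\theta)|\le Q'\|u_\sigma\|$ on that interval (Claim~\ref{C-U}, using $\|u_0\|\le Q'\|u_\sigma\|$ when $t+\theta<0$), gives $\|\chi(t)u_t-\chi_tu_t\|\le Q'r$; hence $|z(t)|\le1+MQ'r$ by~\eqref{bc}. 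Claim~\ref{LEM2} then yields $\|x\|_\infty\le A(1+MQ'r)$. On the other hand, choosing $t:=s-a$, so that $[t-r,t]$ lies in the plateau (here $s-t_0\ge N'>2r$ is used), gives $x_t=c_0u_t$ and therefore $\|x_t\|=c_0\|u_t\|\ge c_0\|u_{t_0}\|/Q'>c_0\|u_s\|/(eQ')=(s-t_0)/(8e(Q')^2)$. Comparing the two bounds gives $s-t_0<8e(Q')^2A(1+MQ'r)$, so fixing $N':=8e(Q')^2A(1+MQ'r)+2r+1$ from the start yields the contradiction and proves Step~1.

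\textbf{Step 2 (bootstrapping) and conclusion.} Given $0\le t\le s$ and $\phi\in\mathcal U(s)$, write $s-t=kN'+h$ with $k\in\mathbb N_0$, $h\in[0,N')$; iterating Step~1 along $s,s-N',\dots,s-kN'$ yields $\|T(s-kN',s)\phi\|\le e^{-k}\|\phi\|$, and then $\|T(t,s)\phi\|=\|T(t,t+h)T(t+h,s)\phi\|\le Q'e^{-k}\|\phi\|\le eQ'e^{-(s-t)/N'}\|\phi\|$ by Claim~\ref{C-U}. Thus the claim holds with $D'=eQ'$ and $\lambda'=1/N'$. The delicate point, already visible in Step~1, is that—unlike in Claim~\ref{exp est stable}, where the stable solution $u$ is bounded and the cut-off may be taken eventually constant—here $u$ is an unstable solution and typically unbounded as $t\to\infty$, so $x=\chi u$ must be compactly supported; making its plateau simultaneously tall enough ($\asymp s-t_0$) to contradict~\eqref{cgt} and flat-slope enough to keep $|\chi'|\le\|u_\cdot\|^{-1}$ is exactly the step that relies on the \emph{uniform} bound $\|u_\tau\|\le Q'\|u_s\|$ on all of $[t_0,s]$ from Claim~\ref{C-U}.
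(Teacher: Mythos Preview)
Your proof is correct and follows essentially the same two-step scheme as the paper (one-step contraction via a Perron-type test function $x=\chi u$ and Claim~\ref{LEM2}, then iteration with Claim~\ref{C-U}). The only noteworthy difference is cosmetic: the paper takes $\chi(t)=\int_t^\infty\psi(\tau)\|u_\tau\|^{-1}\,d\tau$ with $\psi$ supported in $[-r,s+1]$ and $\psi\equiv1$ on $[t_0,s]$, so that $x_0=c\bar\phi\in\mathcal U$ and the lower bound is read off at $t_0$ via $\|x_{t_0}\|\ge\|u_{t_0}\|\int_{t_0}^s\|u_\tau\|^{-1}\,d\tau$, yielding the slightly sharper threshold $N'>eAQ'(MQ'r+1)$; you instead build $\chi$ as a bump with a prescribed plateau, get $x_0=0\in\mathcal U$, and read off the lower bound on the plateau, arriving at $N'\sim 8e(Q')^2A(1+MQ'r)$---but this difference is immaterial for the conclusion.
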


\begin{proof}[Proof of Claim~\ref{exp est unstable}]
We claim that if
\begin{equation}\label{NchoiceU}
N'>eAQ'(MQ'r+1)
\end{equation}
with~$A$ and~$Q'$ as in Claims~\ref{LEM2} and~\ref{C-U}, respectively, then for every $s\geq N'$ and $\phi\in\mathcal U(s)$, 
\begin{equation}\label{eU}
\|T(t,s)\phi \| \le \frac 1 e \|\phi \|\qquad\text{for all $t\in[0,s-N']$}.
\end{equation}
Suppose, for the sake of contradiction, that \eqref{NchoiceU} holds and there exist $s\geq N'$ and $\phi\in\mathcal U(s)$ such that
\begin{equation}\label{lowU}
\|T(t_0,s)\phi \|>\frac 1 e \|\phi \|\qquad\text{for some $t_0\in[0,s-N']$}.
\end{equation}
Since $\phi\in\mathcal U(s)$, there exists $\bar \phi\in \mathcal{U}$ such that $\phi =T(s,0)\bar \phi$.
Let~$u$ denote the unique solution of Eq.~\eqref{LDE} with $u_0=\bar \phi$ so that $u_s=T(s,0)u_0=\phi$. By Claim~\ref{lem: invertibility}, we have that $u_{t_0}=T(t_0,s)u_s=T(t_0,s)\phi$.
From~\eqref{lowU} and Claim~\ref{C-U}, it follows that
\begin{equation}\label{lowupU}
\frac 1 e \|\phi \|<\|u_{t_0}\|=\|T(t_0,\tau)u_\tau\|\leq Q'\|u_\tau\|\qquad \text{for $\tau\geq t_0$}.
\end{equation}
Choose a continuous function $\psi\colon \mathbb{R}\to [0,1]$  such that
\[
\operatorname{supp}\psi\subset[-r,s+1]\qquad\text{and}\qquad\text{$\psi\equiv1$ on $[t_0,s]$.}
\]
In view of~\eqref{lowupU}, $u_{t_0}$ is a nonzero element of~$\mathcal U(t_0)$. From this and Claim~\ref{lem: invertibility}, we conclude that $u_t=T(t,t_0)u_{t_0}\neq0$ for all $t\geq0$. Therefore, we can define a function
$x\colon[-r,\infty)\rightarrow\mathbb R^d$ by
\[
x(t)=\chi(t)u(t)\qquad\text{for $t\geq-r$},
\]
where $\chi\colon[-r,\infty)\rightarrow\mathbb[0,\infty)$  is given by
\[
\chi(t)=\begin{cases}
\displaystyle\int_{0}^{\infty}\psi(\tau)\|u_\tau\|^{-1}\, d\tau
\qquad&\text{for $t\in[-r,0)$}, \\[10pt]
\displaystyle\int_{t}^{\infty}\psi(\tau)\|u_\tau\|^{-1}\, d\tau
\qquad&\text{for $t\geq 0 $.}
\end{cases}
\]
Note that 
\[x_0=cu_0=c\bar\phi\in\mathcal U,\qquad\text{where $c=\displaystyle\int_0^{\infty}\psi(\tau)\|u_\tau\|^{-1}\,d\tau$.}
\] 
Since $x(t)=0$ for  $t\geq s+1$, we have that $\sup_{t\geq -r}|x(t)|<\infty $. Moreover, $x$ is differentiable on $[0, \infty)$ and~\eqref{adm} holds with $z\colon [0,+\infty)\to \mathbb{R}^d$ given by
\begin{equation}\label{eq: z aux}
    z(t)=L(t)\left(\chi(t)u_t-u_t\chi_t\right)-\psi(t)u(t)\|u_t\|^{-1},\qquad t\geq0.    
\end{equation}
Let $t\in[0,\infty)$ and $\theta\in[-r,0]$. Then
\[
(\chi(t)u_t-u_t\chi_t)(\theta)=-u(t+\theta)\int_{t+\theta}^{t}\psi(\tau)\|u_\tau\|^{-1}\,d\tau
\qquad\text{whenever $t+\theta\geq 0$},
\]
and
\[
(\chi(t)u_t-u_t\chi_t)(\theta)=-u(t+\theta)\int_{0}^{t}\psi(\tau)\|u_\tau\|^{-1}\,d\tau
\qquad\text{whenever $t+\theta<0$}.
\]
If $t+\theta\geq0$, then Claim~\ref{C-U} implies for $\tau\geq t+\theta$,
\[
|u(t+\theta)|\leq\|u_{t+\theta}\|=\|T(t+\theta,\tau)u_\tau\|\leq Q'\|u_\tau\|
\]
so that $|u(t+\theta)|\|u_\tau\|^{-1}\leq Q'$.
Therefore, if $t+\theta\geq 0$, then the last inequality yields
\[
|u(t+\theta)|\int_{t+\theta}^{t}\psi(\tau)\|u_\tau\|^{-1}\,d\tau\leq Q'\int_{t+\theta}^{t}\psi(\tau)\,d\tau\leq Q'r.
\]
If  $t+\theta<0$, then, using Claim~\ref{C-U} again, we obtain for $\tau\geq0$,
\[
|u(t+\theta)|\leq\|u_{0}\|=\|T(0,\tau)u_\tau\|\leq Q'\|u_\tau\|
\]
and hence $|u(t+\theta)|\|u_\tau\|^{-1}\leq Q'$. If $t+\theta<0$, then $t<-\theta\leq r$, which, together with the last inequality, yields
\[
|u(t+\theta)|\int_{0}^{t}\psi(\tau)\|u_\tau\|^{-1}\,d\tau\leq Q'\int_{0}^{r}\psi(\tau)\,d\tau\leq Q'r.
\]
From the above estimates, we find that
\[
\|\chi(t)u_t-u_t\chi_t\|\leq Q'r\qquad\text{for $t\geq0$}.
\]
From the last inequality,~\eqref{bc} and~\eqref{eq: z aux}, we obtain that
\begin{equation}\label{znewbound}
|z(t)|\leq MQ'r+1\qquad\text{for $t\geq0$.}
\end{equation}
By the application of Claim~\ref{LEM2}, we conclude that
\[
|x(t)|\leq A(MQ'r+1)\qquad\text{for $t\geq-r$.}
\]
From this, using \eqref{lowupU} and the facts that $\psi\equiv1$ on $[t_0,s]$ and $\|u_\tau\|\leq Q'\|u_s\|=Q'\|\phi\|$ for $0\leq\tau\leq s$ (cf.~Claim~\ref{C-U}), it follows that
\[
\begin{split}
A(MQ'r+1)& \ge \|x_{t_0}\| =\sup_{\theta \in [-r, 0]}|u(t_0+\theta)|\chi (t_0+\theta)\\
&\ge \|u_{t_0}\|\int_{t_0}^{s}\|u_\tau\|^{-1}\, d\tau  
 \ge \|u_{t_0}\| \frac{s-t_0}{Q'\|\phi \|}
> \frac{s-t_0}{eQ'}.
\end{split}
\]
Hence
\[
s-t_0<eAQ'(MQ'r+1)
\]
contradicting the fact that $s-t_0\geq N'>eAQ'(MQ'r+1)$ (cf.~\eqref{NchoiceU}). Thus, \eqref{eU} holds whenever $s\geq N'$ and $\phi\in\mathcal U(s)$.

If $0\leq t\leq s$, then $s-t=kN'+h$ for some $k\in \N_0$ and $h\in[0,N')$. From~\eqref{eU} and Claim~\ref{C-U}, we obtain for $\phi \in \mathcal U(s)$, 
\[
\begin{split}
\|T(t,s)\phi\| &=\|T(t, t+kN'+h)\phi \|\\
&=\|T(t, t+kN')T(t+kN',t+kN'+h)\phi \| \\
&\le e^{-k}\|T(t+kN',t+kN'+h)\phi \|\\
&\le Q'e^{-k}\|\phi \|\le eQ'e^{-\frac{s-t}{N'}}\|\phi \|.
\end{split}
\]
Thus, the conclusion of the Claim holds with $D'=eQ'$ and $\lambda'=1/N'$.
\end{proof}

For each $t\geq0$, let $P(t)$ denote  the projection of~$C$ onto $\mathcal S(t)$ along~$\mathcal U(t)$ associated with the decomposition \eqref{SPLIT2}.

\begin{Claim}\label{Claim: projectionsbounded}
The projections $P(t)$, $t\geq0$, are uniformly bounded, i.e.
\begin{equation*}
\sup_{t\geq 0}\|P(t)\|<\infty.
\end{equation*}
\end{Claim}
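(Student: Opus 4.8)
The strategy is to bound the complementary projections $Q(t):=\Id-P(t)$ (onto $\mathcal U(t)$ along $\mathcal S(t)$) uniformly in $t$; since $\|P(t)\|\le 1+\|Q(t)\|$, this suffices. The point is that the decomposition $\phi=P(t)\phi+Q(t)\phi$ produced in the proof of Claim~\ref{splitting} depends linearly and controllably on $\phi$, provided the auxiliary cut-off function used there is chosen to be a \emph{fixed} profile translated by $t$, so that the estimate~\eqref{cgt} of Claim~\ref{LEM2} can be applied with a $t$-independent constant.

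More precisely, fix once and for all a continuous $\psi_0\colon\mathbb R\to[0,\infty)$ with $\operatorname{supp}\psi_0\subseteq[0,1]$ and $\int_{\mathbb R}\psi_0=1$. Given $t\ge 0$ and $\phi\in C$, run the construction from the proofs of Claims~\ref{ee} and~\ref{splitting} with $\psi(\tau):=\psi_0(\tau-t)$ and $\bar\psi(\tau):=\int_\tau^\infty\psi_0(\sigma-t)\,d\sigma$. Then $\operatorname{supp}\psi\subseteq[t,t+1]$, $\bar\psi\equiv1$ on $(-\infty,t]$, $\bar\psi\equiv0$ on $[t+1,\infty)$, $0\le\bar\psi\le1$, and the resulting $z$ is supported in $[t,t+1+r]$ and vanishes on $[0,t]$. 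Writing $u$ for the solution of~\eqref{LDE} with $u_t=\phi$, the exponential bound~\eqref{bg} from the proof of Claim~\ref{C} (available here precisely because of~\eqref{bc}) gives $\|u_\tau\|=\|T(\tau,t)\phi\|\le Ke^{a(1+r)}\|\phi\|$ for $\tau\in[t,t+1+r]$, while $|u(\tau)|\le\|\phi\|$ for $\tau\in[t-r,t]$; combining these with $\|L(\tau)\|\le M$ and $0\le\bar\psi\le1$ in the three terms defining $z$ yields
\[
\sup_{\tau\ge0}|z(\tau)|\le C_0\|\phi\|,\qquad C_0:=(\|\psi_0\|_\infty+2M)Ke^{a(1+r)},
\]
a constant independent of $t$ and $\phi$.

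Now apply Claim~\ref{LEM2} to this $z$: there is a unique bounded $x\colon[-r,\infty)\to\mathbb R^d$ with $x_0\in\mathcal U$, differentiable on $[0,\infty)$, solving~\eqref{adm}, and with $\sup_{\tau\ge-r}|x(\tau)|\le A\sup_{\tau\ge0}|z(\tau)|\le AC_0\|\phi\|$. The argument in the proof of Claim~\ref{splitting} — which uses only $v_t=\phi$, the compact support of $v$, the boundedness of $x$, and $z\equiv0$ on $[0,t]$ — then shows $\phi-x_t\in\mathcal S(t)$ and $x_t=T(t,0)x_0\in\mathcal U(t)$; hence $\phi=(\phi-x_t)+x_t$ is the decomposition of $\phi$ relative to~\eqref{SPLIT2}, so $Q(t)\phi=x_t$. (For $t=0$ this is immediate anyway from~\eqref{SPLIT}.) Consequently $\|Q(t)\phi\|=\|x_t\|\le\sup_{\tau\ge-r}|x(\tau)|\le AC_0\|\phi\|$ for all $t\ge0$ and $\phi\in C$, so $\sup_{t\ge0}\|Q(t)\|\le AC_0<\infty$, and therefore $\sup_{t\ge0}\|P(t)\|\le1+AC_0<\infty$.

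The only genuinely delicate point — the main obstacle — is making the bound on $\sup_{\tau\ge0}|z(\tau)|$ uniform in $t$: this forces $\psi$ to be a single fixed profile translated by $t$ (so that its sup-norm and the length of its support do not grow with $t$) and requires invoking the exponential estimate~\eqref{bg} only over a $t$-independent time window of length $1+r$, which is exactly where the standing hypothesis~\eqref{bc} enters. Everything else is a routine repetition of Claims~\ref{ee}, \ref{splitting}, \ref{C} and~\ref{LEM2}.
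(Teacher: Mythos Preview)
Your proof is correct but follows a genuinely different route from the paper's. The paper's one-line argument invokes the already-established exponential estimates on $\mathcal S(s)$ and $\mathcal U(s)$ (Claims~\ref{exp est stable} and~\ref{exp est unstable}) together with the exponential bound~\eqref{bg} on the full evolution, and then cites the standard dichotomy argument of~\cite[Lemma~4.2]{MRS}: pick a time step $T$ so that the contraction/expansion rates beat each other, write $T(t+T,t)Q(t)\phi = T(t+T,t)\phi - T(t+T,t)P(t)\phi$, and close the inequality for $\|Q(t)\phi\|$. Your approach instead revisits the explicit construction of the splitting in Claims~\ref{ee} and~\ref{splitting}, standardizes the cut-off as a fixed translated profile so that $\sup|z|\le C_0\|\phi\|$ with $C_0$ independent of $t$, and then reads off $\|Q(t)\phi\|=\|x_t\|\le AC_0\|\phi\|$ directly from the quantitative admissibility bound~\eqref{cgt} of Claim~\ref{LEM2}. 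The virtue of your route is that it is self-contained and does \emph{not} use Claims~\ref{C}--\ref{exp est unstable} at all (only~\eqref{bg}, which follows from~\eqref{bc}); in particular it could be placed immediately after Claim~\ref{splitting}. The paper's route is shorter on the page and matches the admissibility literature, but logically sits downstream of the exponential estimates. Both are valid; yours is arguably the more elementary.
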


\begin{proof}[Proof of Claim~\ref{Claim: projectionsbounded}]
The desired conclusion follows from~\eqref{bg}, Claim~\ref{exp est stable} and Claim~\ref{exp est unstable} by reasoning as in the proof of~\cite[Lemma~4.2]{MRS}. \end{proof}

Now we can easily complete the proof of Theorem \ref{T2}.
Let $\phi \in C$ and $t\geq s\geq 0$.  
Then,
\[ \begin{split}
T(t,s)\phi &= T(t,s)P(s)\phi +T(t,s)(\Id-P(s))\phi.
\end{split}
\]
From this, using \eqref{eq: invariance S U}, we obtain
\[ \begin{split}
P(t)T(t,s)\phi &= P(t)\left( T(t,s)P(s)\phi +T(t,s)(\Id-P(s))\phi\right)\\
&=P(t)T(t,s)P(s)\phi=T(t,s)P(s)\phi.
\end{split}
\]
Thus, 
\eqref{pro} is satisfied. By Claim~\ref{lem: invertibility}, $T(t,s)\rvert_{\Ker P(s)} \colon \Ker P(s)\to \Ker P(t)$ is invertible. Furthermore, by Claim \ref{Claim: projectionsbounded}, the projections $P(t)$, $t\geq 0$, are uniformly bounded. Combining this fact with Claims \ref{exp est stable} and \ref{exp est unstable}, we conclude that \eqref{eq: def est stable} and \eqref{eq: def est unstable} are also satisfied. Thus, Eq.~\eqref{LDE} admits an exponential dichotomy.  
\end{proof}

\section*{Acknowledgements}
L.~Backes was partially supported by a CNPq-Brazil PQ fellowship under Grant No. 307633/2021-7.
D.~Dragi\v cevi\' c was supported in part by Croatian Science Foundation under the Project IP-2019-04-1239 and by the University of Rijeka under the Projects uniri-prirod-18-9 and uniri-prprirod-19-16. 
M.~Pituk was supported by the Hungarian National Research, Development and Innovation Office grant no.~K139346.


\begin{thebibliography}{99}

\bibitem{BD0}
L. Backes and D. Dragi\v cevi\' c, \emph{Shadowing for infinite dimensional dynamics and exponential trichotomies}, Proc. Roy. Soc. Edinburgh Sect.~A \textbf{151} (2021), no. 3, 863--884.

\bibitem{BDPS}
L. Backes, D. Dragi\v cevi\' c, M. Pituk, and L. Singh, \emph{Weighted shadowing for delay differential equations}, Arch. Math. (Basel) \textbf{119} (2022),  539--552.

\bibitem{BDV}
L. Barreira, D. Dragi\v cevi\' c and C. Valls, Admissibility and Hyperbolicity, SpringerBriefs Math., Springer, Cham (2018).


\bibitem{BV}
L. Barreira and C. Valls, \emph{Stability of delay equations}, Electron. J. Qual. Theory Differ. Equ. \textbf{45} (2022), no. 45, 1--24.

\bibitem{BerBrav}
L Berezansky and E. Braverman, \emph{On exponential stability of a linear delay differential equation with an oscillating coefficient}, Appl. Math. Lett. \textbf{22} (2009), 1833--1837.

\bibitem{BerDib}
L. Berezansky, J. Dibl\'\i k, Z. Svoboda, and Z.~\v Smarda, \emph{Uniform exponential stability of linear delayed integro-differential vector equations}, J.~Differential Equations \textbf{270} (2021), 573--595.



\bibitem{Pujals}
N. Bernardes Jr., P.R. Cirilo, U.B. Darji, A. Messaoudi, and E.R. Pujals, \emph{Expansivity and shadowing in linear dynamics}, J. Math. Anal. Appl. \textbf{461} (2018), 796--816.



\bibitem{Coppel} W. Coppel, Stability and Asymptotic Behavior of Differential Equations,  D.C. Heath, Boston (1965).

\bibitem{Dal} Ju.L. Dalecki\u{\i}  and M.G. Kre\u{\i}n, Stability  of Solutions of Differential Equations in Banach Space,  Translations of Mathematical Monographs Vol. 43, American Mathematical Society, Providence, R.I., 1974.

\bibitem{DibZaf} J.~Dibl\'\i k and A.~Zafer, \emph{On stability of linear delay differential equations under Perron's condition}, Abstr. Appl. Anal.,  Article ID 134072 (2011), 9 pages.


\bibitem{Hale} J. Hale, Theory of Functional Differential Equations, Springer, New York, 1977.



\bibitem{MasSchaf}
J.L. Massera and J.J. Sch\"affer, Linear Differential Equations and Function Spaces, Academic Press, New York, 1966.

\bibitem{MRS}
N. Van Minh, F. Rabiger, and R. Schnaubelt, \emph{Exponential stability, exponential expansiveness and exponential dichotomy of evolution equations on the half-line}, Integral Equations Operator Theory, \textbf{32} (1998), 332--353.

\bibitem{Kurz} J.~Kurzweil, \emph{Solutions of linear nonautonomous functional differential equations which are exponentially bounded for $t\to-\infty$}, J.~Differential Equations \textbf{11} (1972), 376--384.


\bibitem{Pecelli2}G. Pecelli, \emph{Functional differential equations: Dichotomies, perturbations, and admissibility}, J.~Differential Equations \textbf{16} (1974), 72--102.




\bibitem{Rud}
W. Rudin, Functional Analysis, Second Edition, McGraw--Hill Inc., New York, 1991.


\bibitem{Schaffer} J.J. Sch\"affer, \emph{Linear difference equations: Closedness of covariant sequences}, Math. Ann. \textbf{187} (1970), 69--76.

\bibitem{Taylor} A.E. Taylor, Introduction to Functional Analysis, John Wiley \& Sons, New York, 1958.

\end{thebibliography}
\end{document}